\newtheorem{theorem}{Theorem}[section]
\newtheorem{problem}{Problem}[section]
\newtheorem{lemma}[theorem]{Lemma}
\newtheorem{corollary}{Corollary}[section]
\newtheorem{conj}[theorem]{Conjecture}
\theoremstyle{definition}
\begin{document}
\title{Tur\'{a}n number of books in non-bipartite graphs}

\author{{\bf Lu Miao$^{a,b}$},~~{\bf Ruifang Liu$^{a}$}\thanks{Corresponding author. E-mail address: rfliu@zzu.edu.cn (R. Liu).}
~~{\bf Edwin R. van Dam$^{b}$}
\\
{\footnotesize $^a$ School of Mathematics and Statistics, Zhengzhou University, Zhengzhou, Henan 450001, China}\\
{\footnotesize $^b$ Department of Econometrics and O.R., Tilburg University, Tilburg, the Netherlands}}

\date{}
\maketitle
{\flushleft\large\bf Abstract}
Let $\mathrm{ex}(n, H)$ be the Tur\'{a}n number of $H$ for a given graph $H$. A graph is color-critical if it contains an edge whose removal reduces its chromatic number. Simonovits' chromatic critical edge theorem states that if $H$ is color-critical with $\chi(H)=k+1$, then there exists an $n_0(H)$ such that ex$(n, H)=e(T_{n,k})$ and the Tur\'{a}n graph $T_{n,k}$ is the only extremal graph provided $n\geq n_0(H).$ A book graph $B_{r+1}$ is a set of $r+1$ triangles with a common edge, where $r\geq0$ is an integer. Note that $B_{r+1}$ is a color-critical graph with $\chi(B_{r+1})=3$. Simonovits' theorem implies that $T_{n,2}$ is the only extremal graph for $B_{r+1}$-free graphs of sufficiently large order $n$. Furthermore, Edwards and independently Khad\v{z}iivanov and Nikiforov completely confirmed Erd\H{o}s' booksize conjecture and obtained that ex$(n, B_{r+1})=e(T_{n,2})$ for $n\geq n_0(B_{r+1})=6r$. Recently, Zhai and Lin [J. Graph Theory 102 (2023) 502-520] investigated the problem of booksize from a spectral perspective.

Note that the extremal graph $T_{n,2}$ is bipartite. Motivated by the above elegant results, we in this paper focus on the Tur\'{a}n problem of non-bipartite $B_{r+1}$-free graphs of order $n$. For $r = 0,$ Erd\H{o}s proved a nice result: If $G$ is a non-bipartite triangle-free graph on $n$ vertices, then $e(G)\leq\big\lfloor\frac{(n-1)^{2}}{4}\big\rfloor+1$. For general $r\geq1,$ we determine the exact value of Tur\'{a}n number of $B_{r+1}$ in non-bipartite graphs and characterize all extremal graphs provided $n$ is sufficiently large. An interesting phenomenon is that the Tur\'{a}n numbers and extremal graphs are completely different for $r=0$ and general $r\geq1.$

\begin{flushleft}
\textbf{Keywords:} Tur\'{a}n number, Book, Extremal graphs, Non-bipartite
\end{flushleft}
\textbf{AMS Classification:} 05C50; 05C35

\section{Introduction}\label{se1}
All graphs considered here are simple and undirected. We denote by $V(G)$ and $E(G)$ the vertex set and the edge set of $G$, respectively. Let $e(G)$ be the number of edges in $G$. For each vertex $u\in V(G)$, let $N(u)$ be the set of neighbours of $u$ in $V(G)$. The degree of $u$, denoted by $d(u)$, is the number of vertices in $N(u)$. For a subset $S\subseteq V(G)$, let $N_{S}(u)$ and $d_{S}(u)$ be the set and the number of neighbours of $u$ in $S$, respectively. For two disjoint subsets $A, B\subseteq V(G)$, we use $E(A,B)$ and $e(A,B)$ to denote the set and the number of edges with one endpoint in $A$ and the other in $B$, respectively. Let $G[S]$ be the subgraph of $G$ induced by $S$, and $|S|$ be the number of vertices in $S.$ Denote by $G\backslash S$ the subgraph of $G$ induced by $V(G)\backslash S$. For a nonempty subset $E'\subseteq E(G)$, let $G[E']$ be the edge-induced subgraph of $G$. We denote by $\chi(H)$ the chromatic number of $H.$

For a given graph $H$, if $G$ does not contain $H$ as a subgraph, then $G$ is {\it $H$-free}. The {\it Tur\'{a}n number} $\mathrm{ex}(n, H)$ is defined as the maximum number of edges in $H$-free graphs on $n$ vertices. An $H$-free graph of order $n$ with $\mathrm{ex}(n, H)$ edges is called an {\it extremal graph}, and we denote by $\mathrm{Ex}(n, H)$ the set of all extremal graphs on $n$ vertices. The Tur\'{a}n-type problem is a cornerstone of extremal graph theory, which aims to determine $\mathrm{ex}(n, H)$ and $\mathrm{Ex}(n, H)$ for various graphs $H$. Let $T_{n, k}$ be the {\it Tur\'{a}n graph} with $n$ vertices and $k$ parts. The study of the Tur\'{a}n-type problem can date back to Mantel's theorem \cite{Mantel1907} in 1907.
\begin{theorem}[\!\cite{Mantel1907}]\label{th1}
$\mathrm{ex}(n, C_3)=\big\lfloor\frac{n^{2}}{4}\big\rfloor$ and $\mathrm{Ex}(n, C_3)=\{T_{n, 2}\}$.
\end{theorem}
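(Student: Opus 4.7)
The plan is to prove Mantel's theorem via the classical degree-sum argument, which simultaneously yields the edge bound and pins down the extremal structure. Let $G$ be a triangle-free graph on $n$ vertices with $e(G)$ edges. The starting observation is that for any edge $uv\in E(G)$, the sets $N(u)$ and $N(v)$ must be disjoint (a common neighbor would close a triangle), so $d(u)+d(v)\le n$.

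Next I would sum this inequality over all edges and rewrite the left-hand side by counting incidences vertex by vertex:
\[
\sum_{uv\in E(G)}\bigl(d(u)+d(v)\bigr)=\sum_{v\in V(G)}d(v)^{2}\le n\cdot e(G).
\]
Applying the Cauchy--Schwarz inequality to the degree sequence gives
\[
\sum_{v\in V(G)}d(v)^{2}\ge \frac{1}{n}\Bigl(\sum_{v\in V(G)}d(v)\Bigr)^{2}=\frac{4e(G)^{2}}{n}.
\]
Combining these two bounds yields $4e(G)^{2}/n\le n\cdot e(G)$, hence $e(G)\le n^{2}/4$. Since $e(G)$ is an integer, $e(G)\le\lfloor n^{2}/4\rfloor$, and $T_{n,2}$ realizes this bound, so $\mathrm{ex}(n,C_{3})=\lfloor n^{2}/4\rfloor$.

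For the uniqueness part, I would track when equality can hold throughout. Equality in Cauchy--Schwarz forces all degrees $d(v)$ to be equal to $2e(G)/n$, while equality in $d(u)+d(v)\le n$ for every edge forces $N(u)\cup N(v)=V(G)$ for every edge. Combined with triangle-freeness, a short argument shows that the vertex set splits into two independent sets on which $G$ is complete bipartite; the equal-degree condition together with the edge count forces the two parts to be as balanced as possible, giving $G=T_{n,2}$. The only subtlety is when $n$ is odd: then strict inequality appears at some point in the chain, and one needs to rerun the argument keeping track of the integer rounding to conclude that $T_{n,2}$ is still the unique extremizer. The main (very mild) obstacle is thus the bookkeeping in the equality case rather than any substantive new idea; the inequality itself is immediate from the degree-sum identity and Cauchy--Schwarz.
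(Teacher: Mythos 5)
The paper does not prove this statement; it is quoted as Mantel's classical theorem with a citation, so there is no in-paper argument to compare against and your proposal must stand on its own. The degree-sum/Cauchy--Schwarz part is correct and complete for the edge bound: $N(u)\cap N(v)=\varnothing$ for every edge $uv$, hence $\sum_{v}d(v)^{2}=\sum_{uv\in E}(d(u)+d(v))\le ne(G)$, and Cauchy--Schwarz gives $4e(G)^{2}/n\le ne(G)$, i.e.\ $e(G)\le\lfloor n^{2}/4\rfloor$, attained by $T_{n,2}$. For even $n$ your equality analysis also closes: equality forces $G$ to be $\frac{n}{2}$-regular, and for any edge $uv$ the independent sets $N(u)$ and $N(v)$ partition $V(G)$, so $G\cong K_{n/2,n/2}$.

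The genuine gap is the uniqueness claim for odd $n$, which you flag as ``bookkeeping'' but which your chain of inequalities cannot deliver. When $n$ is odd the extremal value is $\lfloor n^{2}/4\rfloor=(n^{2}-1)/4<n^{2}/4$, so the two ends of your chain, $4e^{2}/n$ and $ne$, differ by roughly $n/4$ even at the extremal edge count; neither the Cauchy--Schwarz equality condition (all degrees equal) nor the per-edge equality $d(u)+d(v)=n$ is forced, and indeed $T_{n,2}$ itself violates both for odd $n$ (its degrees are $\lfloor n/2\rfloor$ and $\lceil n/2\rceil$). ``Rerunning the argument with integer rounding'' is not a routine repair: you need a different mechanism, e.g.\ take $v$ of maximum degree $\Delta$, note $N(v)$ is independent so every edge has an endpoint in $V(G)\setminus N(v)$, deduce $e(G)\le\sum_{u\notin N(v)}d(u)\le(n-\Delta)\Delta\le\lfloor n^{2}/4\rfloor$, and read off from the equality conditions that $V(G)\setminus N(v)$ is independent and the parts are balanced, which handles all parities uniformly; alternatively, delete both endpoints of an edge and induct. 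As written, your proof establishes $\mathrm{ex}(n,C_{3})=\lfloor n^{2}/4\rfloor$ in full and $\mathrm{Ex}(n,C_{3})=\{T_{n,2}\}$ only for even $n$.
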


Denote by $\mathrm{ex}_{\chi(H)}(n, H)$ the maximum number of edges in non-$(\chi(H)-1)$-partite $H$-free graphs of order $n$. Let $\mathrm{Ex}_{\chi(H)}(n, H)$ be the set of graphs with $\mathrm{ex}_{\chi(H)}(n, H)$ edges. Note that the extremal graph $T_{n,2}$ in Theorem \ref{th1} is bipartite. Erd\H{o}s \cite{EP} further refined Mantel's theorem as follows.

\begin{theorem}[\!\cite{EP}]\label{th2}
$\mathrm{ex}_3(n, C_3)=\Big\lfloor\frac{(n-1)^{2}}{4}\Big\rfloor+1.$
\end{theorem}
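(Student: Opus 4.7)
My plan is to prove the upper and lower bounds separately.

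For the lower bound, I would exhibit a non-bipartite triangle-free graph with exactly $\lfloor(n-1)^{2}/4\rfloor + 1$ edges: start from $K_{\lfloor(n-1)/2\rfloor,\lceil(n-1)/2\rceil}$ with parts $A, B$, delete a single edge $xy$ (with $x\in A$, $y\in B$), and attach a new vertex $v$ adjacent to both $x$ and $y$. This graph has $\lfloor(n-1)^{2}/4\rfloor - 1 + 2$ edges, it is triangle-free (the only edges at $v$ go to the non-adjacent pair $x,y$), and for $n\geq 5$ it contains the $5$-cycle $v\,x\,y'\,x'\,y\,v$ with $x'\in A\setminus\{x\}$, $y'\in B\setminus\{y\}$, so it is non-bipartite.

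For the upper bound, let $G$ be non-bipartite and triangle-free on $n$ vertices, and let $C = v_1 v_2 \cdots v_{2k+1}$ be a shortest odd cycle; triangle-freeness forces $k\geq 2$. The central structural claim I would establish is that every $w \notin V(C)$ satisfies $d_C(w) \leq 2$, and if $d_C(w) = 2$ then its two neighbors are at distance exactly $2$ along $C$. I would prove this by a parity-plus-minimality argument: two neighbors of $w$ at arc-distance $\ell\in\{1,\dots,k\}$ close, via $w$, cycles of length $\ell+2$ and $2k+3-\ell$, whose lengths have opposite parities; the minimality of $|C|$ forces $\ell = 2$, and $\ell=1$ would already create a triangle. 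A hypothetical third neighbor would require three vertices of $C_{2k+1}$ at pairwise $C$-distance $2$, which is easily ruled out by inspection (for $k=2$ two of them end up adjacent on $C$ and produce a triangle; for $k\geq 3$ no such configuration exists on the cycle).

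Combining the claim with Theorem~\ref{th1} applied to the (triangle-free) induced subgraph on $W := V(G)\setminus V(C)$ yields
\[
e(G) \;\leq\; (2k+1) \,+\, 2(n-2k-1) \,+\, \Big\lfloor \tfrac{(n-2k-1)^{2}}{4} \Big\rfloor,
\]
and a short case check on the parity of $n$ shows the right-hand side is maximized at $k=2$, where it equals $\lfloor(n-1)^{2}/4\rfloor+1$.

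The main obstacle I foresee is the structural claim: the parity argument, the ad hoc handling of the very short cycle $C_5$, and the case analysis excluding a third neighbor all demand care even though none is deep. The final optimization over $k$ is routine but easy to slip on when tracking floor functions and parities of $n$, so I would double-check both parities of $n$ at the end.
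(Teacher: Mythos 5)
Your proof is correct. Note that the paper itself gives no proof of Theorem~\ref{th2}: it is quoted from Erd\H{o}s with a citation, so there is nothing internal to compare against. Your argument is essentially the classical one, and all the delicate points check out: the parity computation (the two cycles through $w$ have lengths summing to $2k+5$, so exactly one is odd, and minimality forces arc-distance $\ell=2$), the exclusion of a third neighbour on $C_{2k+1}$, and the optimization over $k$ (indeed $f(k)-f(k+1)=n-2k>0$, and $f(2)=\lfloor(n-5)^2/4\rfloor+2n-5=\lfloor(n-1)^2/4\rfloor+1$ exactly). One step you use implicitly but should state is that the shortest odd cycle $C$ is induced --- a chord would close an odd cycle of length at most $2k$ by the same parity argument --- since otherwise you cannot write $e(G[V(C)])=2k+1$ in your final count. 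Your lower-bound construction is also fine and is in fact isomorphic to the member of the family $\mathscr{G}_0(C_3)$ with $|T_1|=1$ (your degree-$2$ vertex $v$ plays the role of $w_1$). It is worth observing that your decomposition $e(G)\leq e(C)+e(V(C),W)+e(G[W])$ with $C$ a shortest odd cycle and Mantel/Tur\'{a}n applied to $G[W]$ is exactly the template the paper uses in Section~\ref{se2} (Lemma~\ref{cla6} and the proof of Theorem~\ref{main}) for the book case, so your blind proof is very much in the spirit of the paper's own machinery.
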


Caccetta and Jia \cite{CJ2002} extended Theorem \ref{th2} and proved that $e(G)\leq\big\lfloor\frac{(n-2k+1)^{2}}{4}\big\rfloor+2k-1$ for a non-bipartite graph $G$ on $n$ vertices having no odd cycle of length at most $2k+1$. Furthermore, they also characterized all extremal graphs. Caccetta and Jia's result for $k=1$ indicates that
$$\mathrm{Ex}_{3}(n, C_3)=\mathscr{G}_{0}(C_3),$$ where the family of graphs $\mathscr{G}_{0}(C_3)$ is defined as follows.

\begin{figure}\label{f1}
\centering
\includegraphics[width=0.6\textwidth]{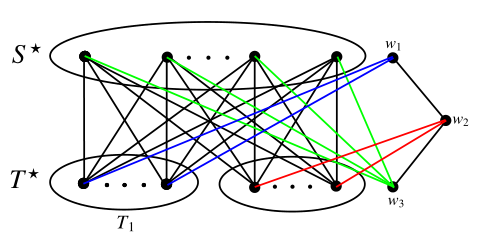}
\caption{Graphs in $\mathscr{G}_0(C_{3})$.}\label{f1}
\end{figure}

\vspace{3mm}
$\bigstar$ Let $S^{\star}$ and $T^{\star}$ be the two parts of $T_{n-3,2}$, and let $T_1$ be a non-empty proper subset of $T^{\star}$. Denote by $\mathscr{G}_0(C_{3})$ the family of graphs obtained from $T_{n-3,2}$ and $P_3=w_1w_2w_3$ by joining $w_1$ to all vertices in $T_1$, $w_2$ to all vertices in $T^{\star}\backslash T_1$ and $w_3$ to all vertices in $S^{\star}$ (see Fig. \ref{f1}).
\vspace{3mm}

The following result generalized Theorem \ref{th1} from $C_{3}$ to $C_{2k+1}$.

\begin{theorem}[\!\cite{B,B1971,FG,W1972}]\label{th0}
Let $k\geq2$ and $n\geq4k-2$ be integers. Then
$$\mathrm{ex}(n, C_{2k+1})=\Big\lfloor\frac{n^{2}}{4}\Big\rfloor~~\mbox{and}~~\mathrm{Ex}(n, C_{2k+1})=\{T_{n,2}\}.$$
\end{theorem}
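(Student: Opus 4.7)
The plan is to prove both statements simultaneously by establishing the inequality with equality characterization. The lower bound is immediate: the Tur\'{a}n graph $T_{n,2}$ is bipartite, hence contains no odd cycle, so in particular it is $C_{2k+1}$-free, and it realizes $\lfloor n^{2}/4\rfloor$ edges. So it suffices to show that any $C_{2k+1}$-free graph $G$ on $n\geq 4k-2$ vertices with $e(G)\geq\lfloor n^{2}/4\rfloor$ must be bipartite; once $G$ is bipartite with at least $\lfloor n^{2}/4\rfloor$ edges, the standard bipartite counting argument (maximize $ab$ subject to $a+b=n$) forces $G=T_{n,2}$.

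The core of the argument is to assume, for contradiction, that $G$ is a non-bipartite $C_{2k+1}$-free graph with $e(G)\geq\lfloor n^{2}/4\rfloor$ and $n\geq 4k-2$, and derive a $C_{2k+1}$. The natural strategy is to take a shortest odd cycle $C$ of length $2\ell+1$ in $G$; since $G$ is $C_{2k+1}$-free we have $\ell\neq k$. When $\ell<k$, I would try to ``grow'' $C$ by inserting pairs of consecutive edges: because $C$ is shortest, it has no odd chords, and the density $e(G)\geq\lfloor n^{2}/4\rfloor$ together with $n\geq 4k-2$ guarantees a vertex $w\notin V(C)$ adjacent to two consecutive vertices $v_i,v_{i+1}$ of $C$, producing an odd cycle of length $2\ell+3$; iterating this operation at most $k-\ell$ times reaches $C_{2k+1}$, contradicting $C_{2k+1}$-freeness. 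When $\ell>k$, I would instead use chord/path exchanges on the long odd cycle: the density forces a short path between two vertices of $C$ at a suitable parity distance, again allowing the construction of an odd cycle of length exactly $2k+1$. The threshold $n\geq 4k-2$ is exactly what is needed so that ``enough room'' exists outside the short odd cycle to carry out these extensions.

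Alternatively—and probably cleaner—one can invoke Bondy's pancyclicity theorem: any graph $G$ on $n$ vertices with $e(G)\geq n^{2}/4$ is either pancyclic or equal to $K_{n/2,n/2}$. Since a non-bipartite $G$ with $e(G)\geq\lfloor n^{2}/4\rfloor$ cannot be $K_{n/2,n/2}$, it would be pancyclic and therefore contain $C_{2k+1}$ as soon as $n\geq 2k+1$, contradicting $C_{2k+1}$-freeness. This reduces the whole theorem to Bondy's result, with the bound $n\geq 4k-2$ needed only to handle the boundary case separating pancyclicity from the $K_{n/2,n/2}$ exception and to ensure Hamiltonicity via a minimum-degree estimate from $e(G)\geq\lfloor n^{2}/4\rfloor$.

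The main obstacle I anticipate is the case where the shortest odd cycle in $G$ is a triangle: lifting a $C_3$ all the way up to a $C_{2k+1}$ requires inserting $2k-2$ extra vertices without creating any $C_{2k+1}$-forbidden shortcut, and this is where the hypothesis $n\geq 4k-2$ must be used sharply. If I were to avoid quoting Bondy's pancyclicity theorem, the detailed edge-count bookkeeping to guarantee the successive extension vertices exist—and that none of the extensions accidentally close an odd cycle of length other than $2k+1$ prematurely—would be the technical heart of the proof.
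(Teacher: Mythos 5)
Note first that the paper does not prove Theorem~\ref{th0}; it is quoted from the cited references (Bondy, Woodall, F\"uredi--Gunderson), so there is no in-paper proof to compare against. Your overall architecture --- lower bound from $T_{n,2}$, upper bound by showing that a $C_{2k+1}$-free graph with at least $\lfloor n^2/4\rfloor$ edges must be bipartite, and then forcing $T_{n,2}$ by the bipartite edge count --- is the right shape and matches how the result is proved in the literature. However, both of your proposed routes to the key step contain genuine errors.

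In the cycle-growing route there is a parity mistake: if $C=v_1v_2\cdots v_{2\ell+1}v_1$ is a shortest odd cycle and $w\notin V(C)$ is adjacent to the consecutive vertices $v_i,v_{i+1}$, then replacing the edge $v_iv_{i+1}$ by the path $v_iwv_{i+1}$ yields a cycle of length $2\ell+2$, which is even, not an odd cycle of length $2\ell+3$; worse, for $\ell\geq2$ such a $w$ would close a triangle $v_iwv_{i+1}v_i$ and contradict the minimality of $C$, so the configuration you want to find cannot even exist. Gaining $+2$ in length requires inserting a path of length $3$ (an edge $w_1w_2$ outside $C$ with $w_1$ adjacent to $v_i$ and $w_2$ to $v_{i+1}$), and the existence of such a configuration is precisely the nontrivial content: the hypothesis $e(G)\geq\lfloor n^2/4\rfloor$ is global, while the shortest odd cycle may sit in a locally sparse part of $G$, so no single-sentence density argument supplies it. In the second route you misquote Bondy's theorem: it asserts that a \emph{Hamiltonian} graph with at least $n^2/4$ edges is pancyclic or $K_{n/2,n/2}$, and the Hamiltonicity hypothesis cannot be dropped ($K_{n-1}$ plus an isolated vertex has more than $n^2/4$ edges for $n\geq6$ and is neither pancyclic nor complete bipartite); moreover $e(G)\geq\lfloor n^2/4\rfloor$ gives no minimum-degree bound at all, so it cannot ``ensure Hamiltonicity.'' Finally, neither route engages with why the threshold is exactly $n\geq 4k-2$: for smaller $n$ there are denser $C_{2k+1}$-free constructions obtained from overlapping cliques, and ruling these out at the sharp threshold is the substantive part of the F\"uredi--Gunderson argument.
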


Note that the extremal graph $T_{n,2}$ in Theorem \ref{th0} is bipartite. Ren, Wang, Wang and Yang's result (\cite{RWWY}, Theorem 1.3) implied the following theorem.
Denote by $T_{n-3,2}\bullet C_3$ the graph obtained from $T_{n-3,2}$ and $C_3$ by sharing a vertex.

\begin{theorem}[\!\cite{RWWY}]\label{th00}
Let $k\geq2$ and $n\geq318k$ be integers. Then
$$\mathrm{ex}_3(n, C_{2k+1})=\Big\lfloor\frac{(n-2)^{2}}{4}\Big\rfloor+3~~\mbox{and}~~\mathrm{Ex}_3(n, C_{2k+1})=\{T_{n-3,2}\bullet C_3\}.$$
\end{theorem}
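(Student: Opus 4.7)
The plan has two parts: a direct construction for the lower bound, and a stability–plus–structure argument for the upper bound.

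For the lower bound, I would verify the graph $G^{\star}:=T_{n-2,2}\bullet C_3$: it has exactly $n$ vertices and $\lfloor(n-2)^2/4\rfloor+3$ edges, it is non-bipartite because it contains the triangle of $C_3$, and it is $C_{2k+1}$-free for $k\geq 2$. The last claim is immediate: the two vertices of the attached triangle that are not the shared vertex have degree $2$, so any cycle through them is forced to be the triangle itself; every cycle avoiding them lies inside the bipartite graph $T_{n-2,2}$ and hence has even length.

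For the upper bound, let $G$ be a non-bipartite $C_{2k+1}$-free graph on $n\geq 318k$ vertices with $e(G)\geq\lfloor(n-2)^2/4\rfloor+3$. Note $e(G)\geq\lfloor n^2/4\rfloor-(n-4)$, so $e(G)$ is within $O(n)$ of $\lfloor n^2/4\rfloor$. An Erd\H{o}s--Simonovits stability version of Theorem \ref{th0} for $C_{2k+1}$ (available in the literature for odd-cycle forbidden subgraphs) gives a max-cut bipartition $V(G)=A\cup B$ with $||A|-|B||=O(1)$, only a bounded number of edges inside the parts, and only a bounded number of missing cross-edges. Maximality of the cut also yields $d_B(v)\geq d_A(v)$ for $v\in A$ and symmetrically. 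Because $G$ is non-bipartite, some ``bad'' edge $uv$ lies inside a part, say in $G[A]$. The $C_{2k+1}$-free hypothesis then tightly constrains the neighbourhoods of $u$ and $v$: any alternating $(A,B)$-path of length $2k$ from $u$ to $v$ closes with the edge $uv$ to a $C_{2k+1}$, so such paths cannot exist. Since the cross-bipartite graph is within a bounded number of edges of $K_{|A|,|B|}$, this in turn forces the cross-neighbourhoods of $u$ and $v$ to be extremely small, so that $u,v$ contribute only a bounded number of cross-edges while every other vertex remains adjacent to essentially everything on the opposite side.

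The main obstacle is then to pin down the bad-edge structure exactly. I would argue (i) every endpoint of a bad edge has cross-degree at most a small absolute constant, so deleting these endpoints leaves a graph very close to $T_{n',2}$ for some $n'$ near $n-2$; (ii) the bad edges are not isolated but share a common endpoint, for otherwise two vertex-disjoint inside edges could be joined by an alternating path in the almost-complete cross-bipartite part to produce a forbidden $C_{2k+1}$; (iii) the bad edges in fact form a single triangle sharing exactly one vertex with the bipartite skeleton; and (iv) the skeleton is precisely $T_{n-2,2}$, by a standard edge-maximisation argument. Step (ii) is the most delicate: one must explicitly rule out configurations like two disjoint bad edges, a path of three bad edges, or a single bad edge, by constructing a $C_{2k+1}$ using the richness of the cross-bipartite part; the extremal count $\lfloor(n-2)^2/4\rfloor+3$ is so close to $\lfloor n^2/4\rfloor$ that almost no slack remains, which makes such constructions possible but also requires careful case analysis. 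Assembling (i)--(iv) matches the construction $T_{n-2,2}\bullet C_3$ and yields the uniqueness claim.
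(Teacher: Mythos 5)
First, a point of reference: the paper does not prove this statement. Theorem \ref{th00} is imported from Ren, Wang, Wang and Yang \cite{RWWY}, so there is no internal proof to compare against; the closest analogue is the paper's own stability-based proof of Theorem \ref{main} for books, which follows the same general strategy you propose. Your lower-bound verification is correct and complete (and you have sensibly read the extremal graph as the $n$-vertex graph obtained by hanging a triangle on one vertex of $T_{n-2,2}$, which is what the edge count $\lfloor(n-2)^2/4\rfloor+3$ requires; taken literally, ``$T_{n-3,2}$ and $C_3$ sharing a vertex'' has only $n-1$ vertices).

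The upper bound, however, has genuine gaps. (1) Your step (i) is false as stated: in the extremal graph itself one may take the maximum cut in which the unique internal edge is $wx$ with $w$ the shared vertex, and then $w$ has cross-degree roughly $n/2$. The correct conclusion from your alternating-path argument is only that \emph{at least one} endpoint of a bad edge has an extremely restricted cross-neighbourhood; the other endpoint may be adjacent to essentially everything on the opposite side, so ``deleting the endpoints of bad edges'' removes $\Theta(n)$ cross-edges and your subsequent edge accounting breaks. The book proof in this paper handles exactly this asymmetry via the dichotomy of Lemma \ref{cla4} together with the separate control of the low-degree set (Lemmas \ref{cla2}, \ref{cla5}, \ref{cla8}) and a final count of $e(C)+e(V(C),V(G^{\star}))+e(G^{\star})$. (2) You never determine the length of the shortest odd cycle, and for $k\geq3$ this is a real issue: a non-bipartite triangle-free graph may have up to $\lfloor(n-1)^2/4\rfloor+1>\lfloor(n-2)^2/4\rfloor+3$ edges (Theorem \ref{th2}), so Caccetta--Jia does \emph{not} rule out a candidate whose shortest odd cycle is $C_5$; one must use $C_{2k+1}$-freeness inside the stability structure to exclude girth-five-type configurations, an analogue of Lemma \ref{cla6} that is not automatic here. (3) The passage from Erd\H{o}s--Simonovits $\epsilon n^2$-stability to the claimed $O(1)$ control of internal and missing cross-edges is precisely the hard core of such theorems and is asserted rather than argued (citing a refined stability theorem for $C_{2k+1}$ from the very paper being proved is circular). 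Until (1)--(3) are repaired, the case analysis in your steps (ii)--(iv) cannot be carried out.
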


In 1941, Tur\'{a}n \cite{Turan1941} generalized Mantel's theorem to the complete graph $K_{r+1}$.

\begin{theorem}[\!\cite{Turan1941}]\label{th3}
$\mathrm{ex}(n, K_{r+1})=e(T_{n, r})$ and $\mathrm{Ex}(n, K_{r+1})=\{T_{n, r}\}$.
\end{theorem}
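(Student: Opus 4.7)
The plan is to use Zykov's symmetrization to establish both the edge bound and uniqueness simultaneously. The lower bound is immediate: since $T_{n,r}$ is $r$-chromatic, it avoids $K_{r+1}$, which gives $\mathrm{ex}(n,K_{r+1}) \geq e(T_{n,r})$. Hence the entire task reduces to showing that any extremal $K_{r+1}$-free graph $G$ on $n$ vertices must equal $T_{n,r}$.

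The key tool is the following operation: whenever $u, v$ are non-adjacent in $G$, let $G_{u \to v}$ denote the graph obtained from $G$ by deleting every edge incident to $v$ and then joining $v$ to every vertex of $N_G(u)$. I would first verify two basic facts: (i) $G_{u \to v}$ remains $K_{r+1}$-free, because any $K_{r+1}$ using $v$ translates, via the substitution $v \mapsto u$, into a $K_{r+1}$ in $G$ (using $u \notin N_G(u)$ to ensure $u$ does not already lie in the clique); and (ii) $e(G_{u \to v}) = e(G) - d_G(v) + d_G(u)$.

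The main structural claim to prove is that non-adjacency is transitive on $V(G)$, which together with reflexivity and symmetry forces $G$ to be complete multipartite. Suppose, for contradiction, there exist distinct $u, v, w$ with $uv, uw \notin E(G)$ but $vw \in E(G)$. I would split into two cases. If $d_G(u) \geq \max\{d_G(v), d_G(w)\}$, apply the symmetrization twice: first form $G' = G_{u \to v}$, then form $G'' = G'_{u \to w}$. Since $vw$ is destroyed in the first step (as $w \notin N_G(u)$), one computes $d_{G'}(w) = d_G(w) - 1$ and $d_{G'}(u) = d_G(u)$, leading to $e(G'') = e(G) + 2 d_G(u) - d_G(v) - d_G(w) + 1 \geq e(G) + 1$, contradicting extremality. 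Otherwise, up to relabelling, $d_G(v) > d_G(u)$, and already $G_{v \to u}$ is a $K_{r+1}$-free graph with strictly more edges than $G$, again a contradiction.

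Hence $G$ is complete multipartite, and $K_{r+1}$-freeness forces at most $r$ parts. To conclude, I would note that among complete multipartite graphs on $n$ vertices with parts of sizes $n_1, \ldots, n_r$ (some possibly zero), one has $e(G) = \binom{n}{2} - \sum_{i=1}^{r} \binom{n_i}{2}$, and strict convexity of $x \mapsto \binom{x}{2}$ shows that this sum is uniquely minimized when the $n_i$ differ by at most one, which characterizes $T_{n,r}$. The main obstacle in executing the plan is the bookkeeping in the double-symmetrization step: one has to carefully track the destruction of the edge $vw$ and the evolution of the degrees of $u$ and $w$ under the first operation, since these directly drive the sign of the final edge-count difference. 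Once that accounting is laid out cleanly, every remaining step is routine.
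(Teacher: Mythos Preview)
Your proof via Zykov symmetrization is correct; the accounting in the double-symmetrization step is handled properly (in particular, the observation that $vw$ is destroyed in $G'$ so that $d_{G'}(w)=d_G(w)-1$), and the convexity argument at the end pins down $T_{n,r}$ uniquely.

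As for comparison with the paper: the paper does not actually prove this theorem. Theorem~\ref{th3} is stated with a citation to Tur\'{a}n's original 1941 paper and used as a known black box; no argument is supplied in the present manuscript. Your Zykov-style proof is one of the standard modern proofs of the result and is entirely appropriate here. (Tur\'{a}n's original argument proceeds by induction on $r$, removing a maximum clique; the symmetrization route you chose has the advantage of yielding the complete-multipartite structure directly, after which the balancing step is immediate.)
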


Since the extremal graph $T_{n,r}$ in Theorem \ref{th3} is $r$-partite, Brouwer \cite{B1981} considered non-$r$-partite graphs and proposed a refinement of Tur\'{a}n's theorem.
\begin{theorem}[\!\cite{B1981}]\label{th4}
Let $n\geq2r+1$ be an integer. Then
$$\mathrm{ex}_{r+1}(n, K_{r+1})=e(T_{n,r})-\Big\lfloor\frac{n}{r}\Big\rfloor+1.$$
\end{theorem}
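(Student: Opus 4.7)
The plan is to establish the theorem via a matching construction (lower bound) and an upper bound; the upper bound carries the substantive argument.

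\emph{Lower bound.} I would exhibit a non-$r$-partite $K_{r+1}$-free graph on $n$ vertices with exactly $e(T_{n,r}) - \lfloor n/r\rfloor + 1$ edges by a small local surgery on $T_{n,r}$. Labelling the parts $V_1,\ldots,V_r$ so that $|V_1| = \lfloor n/r\rfloor$ is smallest, one natural template is to pick $u \in V_2$ and $v_1, v_2 \in V_1$, add the edge $v_1v_2$ (creating a triangle $uv_1v_2$ whose other two edges already lie in $T_{n,r}$), and then delete enough cross-edges between $\{v_1,v_2\}$ and one further part to destroy every potential $K_{r+1}$ passing through the new edge. The numerics can be tuned so that exactly $\lfloor n/r\rfloor - 1$ edges are lost on balance. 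Two verifications remain: that no $K_{r+1}$ survives, and that the new intra-part edge cannot be ``un-done'' by any alternative $r$-colouring. The latter is the delicate point, since one added intra-part edge can usually be absorbed by moving an endpoint to the opposite side; the role of the triangle together with the cross-edge deletions is precisely to obstruct every such re-colouring.

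\emph{Upper bound.} Let $G$ be $K_{r+1}$-free and non-$r$-partite on $n \ge 2r+1$ vertices. Tur\'an's Theorem (Theorem~\ref{th3}) gives $e(G)\le e(T_{n,r})$, with equality forcing $G = T_{n,r}$, which is $r$-partite; hence $e(G) \le e(T_{n,r}) - 1$. To improve this to $e(T_{n,r}) - \lfloor n/r\rfloor + 1$ I would argue by induction on $n$, the base case $n=2r+1$ handled by direct inspection. In the inductive step, pick a vertex $v$ of minimum degree. If $G - v$ is non-$r$-partite, the inductive bound for $n-1$ together with $d(v) \le 2e(G)/n$ closes the case. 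If $G - v$ is $r$-partite but $G-v \ne T_{n-1,r}$, then Tur\'an uniqueness gives $e(G-v) \le e(T_{n-1,r}) - 1$, and the saved edge propagates. Finally, if $G-v = T_{n-1,r}$, then the non-$r$-partiteness of $G$ forces $v$ to have a neighbour in every part of $T_{n-1,r}$; combining any such $r$-tuple of neighbours with $v$ yields a $K_{r+1}$, so $d(v)$ must be far enough below $n-1-\lceil n/r\rceil$ to furnish the required deficit.

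The \emph{main obstacle} is the first case above: the bare inequality $\delta(G)\le 2e(G)/n$ is a hair short of the needed degree bound. I would instead pick $v$ not of minimum degree but of maximum ``defect'' relative to a Tur\'an-like partition around a maximum $K_r$ on vertices $X=\{x_1,\ldots,x_r\}$. An alternative, perhaps cleaner, route replaces the induction by a direct structural analysis: assign each $v\notin X$ to some class $A_i$ with $v\not\sim x_i$, obtaining a partition $V(G)=A_1\cup\cdots\cup A_r$ with $x_i\in A_i$, and decompose
\[
e(G) \;=\; \sum_{i=1}^r e(G[A_i]) \;+\; \sum_{i<j} e(A_i,A_j).
\]
If $\sum_i e(G[A_i])=0$ the partition witnesses that $G$ is $r$-partite, a contradiction; hence some $A_i$ contains an edge $uv$, and combining $uv$ with the $K_{r-1}$ on $X\setminus\{x_i\}$ forces $\{u,v\}$ to have no common neighbour in at least one further part $A_j$ with $|A_j|\ge \lfloor n/r\rfloor$. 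The resulting missing cross-edges account exactly for the deficit $\lfloor n/r\rfloor - 1$ that separates the claimed bound from Tur\'an's.
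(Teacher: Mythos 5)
First, a point of reference: the paper does not prove this statement at all --- it is quoted from Brouwer \cite{B1981} (see also \cite{AFGS,KP,TU}) without proof --- so there is no internal argument to compare yours against, and I assess your sketch on its own terms. Your overall strategy (explicit construction plus a max-clique-based upper bound) is the standard and viable one, but both halves have gaps at exactly the points that carry the difficulty. For the lower bound, the verification you defer --- that no alternative $r$-colouring absorbs the added edge --- is the entire content of the construction, and your stated template can fail it: to lose only $\lfloor n/r\rfloor-1$ edges you must delete one edge from each vertex of a part of size $\lfloor n/r\rfloor$ to $\{v_1,v_2\}$, and if $\lfloor n/r\rfloor=2$ (e.g.\ $n=2r+1$) with $v_1,v_2$ filling a smallest part $V_1$ and $V_j=\{a,b\}$ the deletion part, then after deleting $v_1a$ and $v_2b$ the classes $\{v_1,a\}$, $\{v_2,b\}$ together with the untouched parts form a proper $r$-colouring (and deleting both edges at $v_1$ lets $v_1$ simply join $V_j$'s class). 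The pair $v_1,v_2$ must instead sit in a part of size at least $3$ --- which exists precisely because $n\ge 2r+1$ --- so that a $5$-cycle survives inside the union of the two modified parts and forces $\chi\ge r+1$.

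For the upper bound, your first route is one you yourself concede falls short. In the second route the decisive step is unjustified in two ways. Membership of $u,v$ in $A_i$ only records $u\not\sim x_i$ and $v\not\sim x_i$; it does not make $u,v$ complete to $X\setminus\{x_i\}$, so from the edge $uv$ and $K_{r+1}$-freeness you may conclude only that $\{u,v\}\cup(X\setminus\{x_i\})$ misses at least one edge --- a deficit of $1$, not of $\lfloor n/r\rfloor-1$. The stronger claim, that $u$ and $v$ have no common neighbour throughout some class of size at least $\lfloor n/r\rfloor$, would require knowing that the cross edges between the $A_j$'s are nearly complete and that the relevant class is large (the $A_j$'s are not balanced a priori); neither is established, and at the threshold $n=2r+1$ you cannot appeal to stability. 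Moreover, even granting that each intra-class edge forces a batch of missing cross edges, when several classes contain edges these batches can coincide, so the deficits do not simply accumulate; you would also need to compare $\sum_{i<j}|A_i||A_j|$ for your unbalanced partition against $e(T_{n,r})$ and show the imbalance penalty does not eat the gain. Controlling this overlap and imbalance is where the real work in the known proofs lies, and it is absent here; you should also justify the existence of the clique $X$ itself, via $e(G)>e(T_{n,r-1})$ and Theorem \ref{th3} applied to $K_r$, which needs the inequality $e(T_{n,r})-e(T_{n,r-1})>\lfloor n/r\rfloor-1$ down to $n=2r+1$.
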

Theorem \ref{th4} was also independently studied in many references, see \cite{AFGS,KP,TU}. There are many extremal graphs attaining the maximum number of edges. Recently, Li and Peng \cite{LP2} proved a spectral version of Theorem \ref{th4}.

A graph is {\it color-critical} if it contains an edge whose removal reduces its chromatic number. Simonovits \cite{S1968,S1974} proved the following outstanding result, which is known as Simonovits' chromatic critical edge theorem.

\begin{theorem}[\!\cite{S1968,S1974}]\label{th5}
If $H$ is color-critical with $\chi(H)=k+1$, then there exists an $n_0(H)$ such that $\mathrm{ex}(n, H)=e(T_{n,k})$ and the Tur\'{a}n graph $T_{n,k}$ is the only extremal graph provided $n\geq n_0(H).$
\end{theorem}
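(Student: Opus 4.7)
The plan is to combine the Erd\H{o}s--Stone--Simonovits density asymptotic with a stability argument and a greedy embedding that exploits the color-critical edge. For the lower bound, since $\chi(T_{n,k})=k<k+1=\chi(H)$, the Tur\'{a}n graph $T_{n,k}$ is $H$-free, so $\mathrm{ex}(n,H)\ge e(T_{n,k})$. The task is therefore the matching upper bound together with uniqueness of $T_{n,k}$ for large $n$; concretely, I will show that any $H$-free graph $G$ on $n$ vertices with $e(G)\ge e(T_{n,k})$ must equal $T_{n,k}$.

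First I would invoke the Erd\H{o}s--Simonovits stability theorem: for any $\varepsilon>0$ there exists $n_1(\varepsilon,H)$ such that every such $G$ differs from some complete $k$-partite graph in at most $\varepsilon n^2$ edges. Pick a partition $V(G)=V_1\cup\cdots\cup V_k$ minimizing the number of ``bad'' edges lying inside the parts; stability then gives at most $\varepsilon n^2$ bad edges and $|V_i|=n/k+O(\varepsilon n)$. A standard degree-cleaning step, using the extremality of $G$, upgrades this to the bound that every vertex $v\in V_i$ satisfies $d_{V_j}(v)\ge(\frac{1}{k}-\delta)n$ for each $j\ne i$: a vertex far below this bound could be removed while adding back many more edges on the vertex side, contradicting the maximality of $e(G)$ against the Tur\'{a}n count on $n-1$ vertices.

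The heart of the argument is to deduce from the color-critical hypothesis that each $V_i$ is independent. Fix a color-critical edge $e=uv\in E(H)$ with $\chi(H-e)=k$, and fix a proper $k$-coloring $c$ of $H-e$; since $\chi(H)=k+1$ one must have $c(u)=c(v)$, say $c(u)=c(v)=1$. Suppose for contradiction that some $V_i$ contains an edge $xy$; without loss of generality $i=1$. Embed $u\mapsto x$ and $v\mapsto y$, so that the critical edge $uv$ is supplied by $xy$, and extend greedily to an embedding of $H-e$ by processing the remaining vertices of $H$ one at a time and placing $w\in V(H)$ in $V_{c(w)}$. At each step we need an image in $V_{c(w)}$ adjacent to all already-placed neighbors of $w$; since at most $|V(H)|$ vertices have been used and each previously placed vertex has common-neighborhood of size $(\frac{1}{k}-O(\delta))n$ in every other part, inclusion--exclusion (or a simple union bound) produces many valid choices provided $n$ is large in terms of $|V(H)|$. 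This yields $H\subseteq G$, a contradiction, so every $V_i$ is independent. Hence $G$ is $k$-partite with parts summing to $n$, and $e(G)\ge e(T_{n,k})$ forces $|V_i|\in\{\lfloor n/k\rfloor,\lceil n/k\rceil\}$ together with $G$ being complete $k$-partite, i.e.\ $G=T_{n,k}$.

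The main obstacle is calibrating the greedy embedding quantitatively. One must ensure that, after cleaning, every pair (and in fact every bounded collection) of vertices in $G$ has a large common neighborhood in each $V_j$; this is what determines the thresholds $\varepsilon,\delta$ and therefore $n_0(H)$ in terms of $|V(H)|$. Once this common-neighborhood estimate is secured, the embedding succeeds uniformly for every purported in-part edge $xy$, and the remaining optimization among complete $k$-partite graphs is routine.
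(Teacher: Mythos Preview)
The paper does not prove this theorem; it is quoted from Simonovits~\cite{S1968,S1974} and used as a black box (for instance in Lemmas~\ref{cla2} and~\ref{cla9} and in inequality~(\ref{eq12})). There is therefore no in-paper argument to compare your attempt against.

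Your outline is the standard stability-plus-embedding proof of the chromatic critical edge theorem and is sound in structure. One step is phrased a bit too optimistically: the assertion that cleaning yields $d_{V_j}(v)\ge(\tfrac{1}{k}-\delta)n$ for \emph{every} $j\ne i$ does not follow from a minimum-degree bound together with max-cut minimality alone; minimality of the partition only gives $d_{V_i}(v)\le d_{V_j}(v)$ for each $j$, which controls the own-part degree but not each individual cross-degree. The usual repair is to note that $e(G)\ge e(T_{n,k})$ together with at most $\varepsilon n^2$ in-part edges forces at most $\varepsilon n^2$ missing cross-edges, so all but $O(\varepsilon n/\alpha)$ vertices are ``regular'' in the sense of having at least $|V_j|-\alpha n$ neighbours in every other part; one then either shows that no irregular vertex can persist under extremality, or simply carries out the greedy embedding of $H$ using only regular vertices for the non-critical images. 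With that refinement (which you in effect flag in your final paragraph), your plan goes through and produces the theorem with $n_0(H)$ depending only on $|V(H)|$.
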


Books, as a kind of color-critical graph, play an important role in the history of extremal graph theory. In 1962, Erd\H{o}s \cite{E} initiated the study of books in graphs. Since then books have attracted considerable attention both in extremal graph theory (see, e.g., \cite{EFG,EFR,KN}) and in Ramsey graph theory (see, e.g., \cite{NR2005,RS}).
A book graph $B_{r+1}$ is a set of $r+1$ triangles with a common edge, where $r\geq0$. It is easy to see that $B_{r+1}$ is a color-critical graph with $\chi(B_{r+1})=3$. Simonovits' chromatic critical edge theorem implies that $T_{n,2}$ is the only extremal graph for $B_{r+1}$-free graphs of sufficiently large order $n$. Furthermore, Edwards \cite{E1977} and independently Khad\v{z}iivanov and Nikiforov \cite{KN} completely confirmed Erd\H{o}s' booksize conjecture.
\begin{theorem}[\!\cite{E1977,KN}]\label{th6}
$\mathrm{ex}(n, B_{r+1})=e(T_{n,2})$ for $n\geq n_0(B_{r+1})=6r$.
\end{theorem}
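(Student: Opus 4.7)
The lower bound $\mathrm{ex}(n, B_{r+1}) \geq e(T_{n,2})$ is immediate, as $T_{n,2}$ is bipartite and hence triangle-free, so in particular contains no copy of $B_{r+1}$. The content of the theorem lies in the matching upper bound, which I would prove in its contrapositive form: every graph $G$ on $n \geq 6r$ vertices with $e(G) > \lfloor n^2/4\rfloor$ must contain $B_{r+1}$ as a subgraph.

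The plan is to reduce this to a sharp \emph{book-density} inequality: if $G$ has $n$ vertices and $e(G) > n^2/4$, then some edge $uv\in E(G)$ satisfies $|N(u)\cap N(v)| > n/6$. Granting this inequality the theorem follows at once, since under $n \geq 6r$ we have $n/6 \geq r$, so the witnessing edge lies in at least $r+1$ triangles, which is precisely a book $B_{r+1}$ with spine $uv$. Contrapositively, $B_{r+1}$-freeness forces the uniform bound $|N(u)\cap N(v)|\le r\le n/6$ on every edge, and hence the desired edge count $e(G)\le \lfloor n^2/4\rfloor=e(T_{n,2})$.

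To set up the book-density inequality I would combine the global identities
\[
\sum_{v\in V(G)} d(v) = 2e(G), \qquad \sum_{uv \in E(G)} |N(u)\cap N(v)| = 3\,t(G),
\]
where $t(G)$ denotes the number of triangles of $G$, with a Moon--Moser / Kruskal--Katona type lower bound on $t(G)$ in terms of $e(G)$, and then derive a contradiction from the hypothesis $|N(u)\cap N(v)|\le n/6$ on every edge. The main obstacle is the sharpness of the constant $1/6$, equivalently of the threshold $n_0=6r$: a direct convexity estimate only yields $d(u)+d(v)\le 7n/6$ per edge and, after Cauchy--Schwarz, the weaker conclusion $e(G)\le 7n^2/24$. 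Sharpening $7n^2/24$ down to $n^2/4$ cannot be done from local degree information alone, but requires exploiting the global interaction of the non-neighborhoods $V\setminus(N(u)\cup N(v))$ across different edges. I see two plausible routes: either a sharp triangle-counting inequality of Kruskal--Katona flavour (along the lines of Khad\v{z}iivanov and Nikiforov), or induction on $n$ using a carefully chosen vertex to delete so that $G\setminus v$ still satisfies the inductive hypothesis (along the lines of Edwards).
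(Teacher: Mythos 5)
First, a point of reference: the paper does not prove this statement at all --- Theorem \ref{th6} is quoted from Edwards \cite{E1977} and Khad\v{z}iivanov--Nikiforov \cite{KN}, and is used as a black box (e.g.\ via Theorem \ref{th5}) in the proof of Theorem \ref{main}. So there is no in-paper argument to compare against; your proposal has to be judged on its own.

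Your lower bound and your reduction are correct: $T_{n,2}$ is triangle-free, hence $B_{r+1}$-free, and if one knows that $e(G)>n^{2}/4$ forces an edge $uv$ with $|N(u)\cap N(v)|>n/6$, then $n\geq 6r$ gives $|N(u)\cap N(v)|\geq r+1$ and hence a copy of $B_{r+1}$, which yields the upper bound by contraposition. The identities you write down ($\sum_v d(v)=2e(G)$ and $\sum_{uv\in E}|N(u)\cap N(v)|=3t(G)$) are also correct, and your observation that the naive per-edge bound $d(u)+d(v)\leq 7n/6$ only gives $e(G)\leq 7n^{2}/24$ after Cauchy--Schwarz is accurate. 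The genuine gap is that the ``book-density inequality'' you reduce to \emph{is} the theorem of Edwards and Khad\v{z}iivanov--Nikiforov (it is exactly Erd\H{o}s' booksize conjecture), so the reduction carries essentially no content: all of the difficulty of Theorem \ref{th6} is concentrated in that inequality, and your proposal stops precisely there, offering only two named directions (a Kruskal--Katona-type triangle count, or induction on $n$ with a well-chosen vertex deletion) without executing either. Neither route is routine --- the sharp constant $1/6$ is notoriously delicate, and the weaker bounds obtainable from convexity or from the standard Moon--Moser-type lower bound $t(G)\geq \frac{e(G)}{3n}\left(4e(G)-n^{2}\right)$ fall short of it. As written, the proposal is an honest and correctly organized reduction, but not a proof.
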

\begin{figure}\label{f2}
\centering
\includegraphics[width=1.02\textwidth]{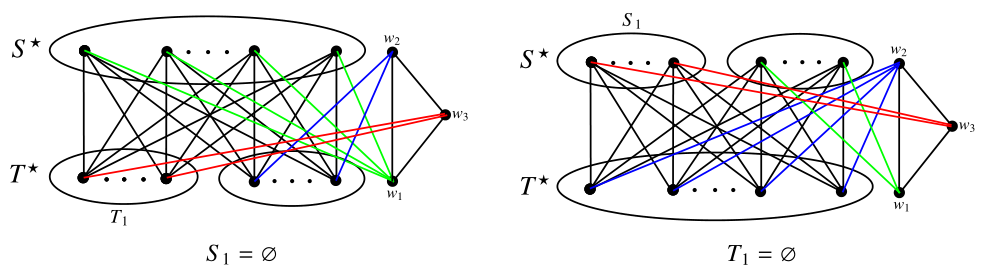}
\caption{$\mathscr{G}_1(B_{2})$ with $S_1=\varnothing$ or $T_1=\varnothing$.}\label{f2}
\end{figure}
\begin{figure}\label{f3}
\centering
\includegraphics[width=0.5\textwidth]{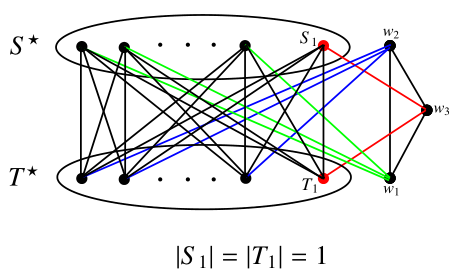}
\caption{$\mathscr{G}_1(B_{2})$ with $|S_1|=|T_1|=1$.}\label{f3}
\end{figure}
Note that the extremal graph $T_{n,2}$ is bipartite. Hence it is natural and interesting to consider the following problem.

\begin{problem}\label{p1}
What is the maximum number of edges in non-bipartite $B_{r+1}$-free graphs of order $n$? Moreover, characterize all extremal graphs.
\end{problem}

For $r=0$, $B_{r+1}$ is isomorphic to a triangle. Erd\H{o}s \cite{EP} determined the maximum number of edges in non-bipartite triangle-free graphs of order $n$, and the extremal graphs are completely characterized by Caccetta and Jia \cite{CJ2002}.
For $r\geq1$, we in this paper focus on Problem \ref{p1}, determine the exact value of Tur\'{a}n number of $B_{r+1}$ in non-bipartite graphs with $n$ vertices and characterize all extremal graphs for sufficiently large $n$.

Next we define the family of graphs $\mathscr{G}_1(B_{2})$ and the graph $K_{\lfloor\frac{n-1}{2}\rfloor,\lceil\frac{n-1}{2}\rceil}^{r, r}$.

\vspace{1mm}
$\bigstar$ Write the two parts of $T_{n-3,2}$ as $S^{\star}$ and $T^{\star}$. Let $S_1$ and $T_1$ be subsets of $S^{\star}$ and $T^{\star}$, respectively. Let $C_3=w_1w_2w_3w_1$ be a triangle. Define $\mathscr{G}_1(B_{2})$ as the family of graphs obtained from $T_{n-3,2}$ and $C_3$ by joining $w_3$ to all vertices in $S_1\cup T_1$, $w_1$ to all vertices in $S^{\star}\backslash S_1$ and $w_2$ to all vertices in $T^{\star}\backslash T_1$, where $|S_1||T_1|\leq1$. It follows from $|S_1||T_1|\leq1$ that $\mathscr{G}_1(B_{2})$ can be classified into three types of graphs: $S_1=\varnothing$, $T_1=\varnothing$ (see Fig. \ref{f2}) and $|S_1|=|T_1|=1$ (see Fig. \ref{f3}).

\vspace{1mm}
$\bigstar$ Denote by $K_{\lfloor\frac{n-1}{2}\rfloor,\lceil\frac{n-1}{2}\rceil}^{r, r}$ the graph obtained from the complete bipartite graph $K_{\lfloor\frac{n-1}{2}\rfloor,\lceil\frac{n-1}{2}\rceil}$ by adding a new vertex $v_{0}$ such that $v_{0}$ has $r$ neighbours in each part of $K_{\lfloor\frac{n-1}{2}\rfloor,\lceil\frac{n-1}{2}\rceil}$, where $r\geq1$ be an integer (see Fig. \ref{f4}).
\vspace{2mm}

Now we present our main result.

\begin{figure}\label{f4}
\centering
\includegraphics[width=0.6\textwidth]{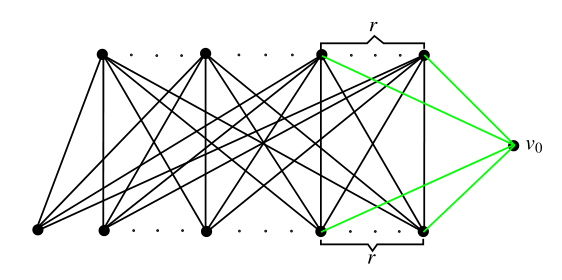}
\caption{The graph $K_{\lfloor\frac{n-1}{2}\rfloor,\lceil\frac{n-1}{2}\rceil}^{r, r}$.}\label{f4}
\end{figure}

\begin{theorem}\label{main}
Let $r\geq1$ be an integer and $n$ be sufficiently large. Then
\begin{eqnarray*}
\mathrm{ex}_3(n, B_{r+1})=\bigg\lfloor\frac{(n-1)^{2}}{4}\bigg\rfloor+2r.
\end{eqnarray*}
Moreover, $\mathrm{Ex}_3(n, B_{r+1})=\mathscr{G}_1(B_{2})$ if $r=1$ and $\mathrm{Ex}_3(n, B_{r+1})=\big\{K_{\lfloor\frac{n-1}{2}\rfloor,\lceil\frac{n-1}{2}\rceil}^{r, r}\big\}$ if $r\geq2$.
\end{theorem}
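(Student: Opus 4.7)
The lower bound is by the two constructions. For $r\geq 2$, the graph $K_{\lfloor(n-1)/2\rfloor,\lceil(n-1)/2\rceil}^{r,r}$ has $\lfloor(n-1)^2/4\rfloor+2r$ edges, is non-bipartite (the extra vertex $v_0$ creates odd cycles of length $5$ with the two parts), and is $B_{r+1}$-free: every triangle uses $v_0$, and each edge $v_0u$ (with $u$ in one part) lies in only $r$ triangles, while any edge between the two parts lies in at most one triangle (necessarily containing $v_0$). For $r=1$, each graph in $\mathscr{G}_1(B_2)$ has $\lfloor(n-3)^2/4\rfloor+3+(n-3)=\lfloor(n-1)^2/4\rfloor+2$ edges; the $B_2$-freeness follows because the only ``book-dangerous'' edge is one incident with $w_1$, $w_2$, or $w_3$, and the restriction $|S_1||T_1|\leq 1$ ensures no common edge lies in two triangles.

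The upper bound is the core of the proof. Suppose $G$ is a non-bipartite $B_{r+1}$-free graph on $n$ vertices with $e(G)\geq\lfloor(n-1)^2/4\rfloor+2r$. The plan is a stability-then-structure argument. First, $G$ must contain a triangle: otherwise Theorem~\ref{th2} bounds $e(G)\leq\lfloor(n-1)^2/4\rfloor+1<\lfloor(n-1)^2/4\rfloor+2r$. By Simonovits' theorem and Erd\H{o}s--Simonovits stability applied to $B_{r+1}$, $G$ is $o(n^2)$-close to $T_{n,2}$: fixing a maximum cut $(A,B)$, the quantities $e(G[A]),\,e(G[B]),$ and $||A|-|B||$ are all of order $o(n)$. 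I would then sharpen this using the $B_{r+1}$-free condition directly. Call $v\in A$ \emph{typical} if $N_A(v)=\varnothing$ and $N_B(v)=B$, and similarly for $B$. A standard argument (based on the maximality of the cut together with the book constraint: if two non-typical vertices coexist they force a book) shows that the set $W$ of non-typical vertices has size bounded by a constant $C=C(r)$.

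The main structural step is to analyze $W$. Since $G$ is non-bipartite, $W\neq\varnothing$ and $W$ contains all vertices responsible for odd closed walks. Let $v_0\in W$ be a vertex creating an odd cycle. The key booksize inequality is: for any edge $v_0u$ lying in a triangle, $|N(v_0)\cap N(u)|\leq r$. Applied when $u$ is a typical neighbour of $v_0$ in the opposite part (so $N(u)$ essentially equals the whole opposite side), this forces $v_0$ to have at most $r$ neighbours in \emph{each} part of the typical bipartition. Hence $v_0$ contributes at most $2r$ edges. A careful edge accounting then compares $e(G)$ with $e(K_{\lfloor(n-1)/2\rfloor,\lceil(n-1)/2\rceil}^{r,r})$: any deficit in the typical bipartite part (missing cross-edges, extra within-part edges) must be compensated by $W\setminus\{v_0\}$, and I would show this compensation is impossible without either violating $B_{r+1}$-freeness or reducing to the extremal structure. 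For $r\geq 2$, matching both the edge bound and $B_{r+1}$-freeness pins down $K_{\lfloor(n-1)/2\rfloor,\lceil(n-1)/2\rceil}^{r,r}$ uniquely, because placing more than one ``defect'' vertex immediately creates a $B_{r+1}$. For $r=1$, the booksize $r+1=2$ is small enough that a triangle $w_1w_2w_3$ embedded into $T_{n-3,2}$ as described in $\mathscr{G}_1(B_2)$ is also feasible, and I would verify that these, together with $K_{\lfloor(n-1)/2\rfloor,\lceil(n-1)/2\rceil}^{1,1}$ (which actually coincides with the $|S_1|=|T_1|=1$ member of $\mathscr{G}_1(B_2)$), exhaust the extremal set.

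I expect the hard part to be the last structural step: precisely classifying the possibilities for $W$ and showing that no other configuration attains the bound. Equivalently, one must rule out mixed defect patterns (for instance, a defect vertex in $A$ plus a non-edge in the cross-bipartition plus an extra edge inside $B$) that at first glance might preserve the edge count. Controlling these simultaneous perturbations via the $B_{r+1}$-free constraint, especially to obtain uniqueness in the $r\geq 2$ case, is where the main technical effort will go, and it is the reason the theorem requires $n$ to be sufficiently large: smaller perturbations may be individually feasible but only a linear lower bound on $n$ forces enough ``typical'' structure to rule them all out.
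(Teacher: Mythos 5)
Your overall architecture (constructions for the lower bound, Erd\H{o}s--Simonovits stability, then an analysis of the exceptional vertices) matches the paper's, and your lower-bound verification is essentially correct. But the load-bearing step of the upper bound is asserted rather than proved, and as stated it is false. You call $v\in A$ typical when $N_A(v)=\varnothing$ and $N_B(v)=B$, and claim that a ``standard argument (based on the maximality of the cut together with the book constraint: if two non-typical vertices coexist they force a book)'' bounds the number of non-typical vertices by a constant $C(r)$. Neither half of this holds: in $K_{\lfloor\frac{n-1}{2}\rfloor,\lceil\frac{n-1}{2}\rceil}^{r,r}$ itself, once $v_0$ is placed into a part of the maximum cut, every vertex of the opposite part not adjacent to $v_0$ fails the exactness condition $N_B(v)=B$, so linearly many vertices are non-typical; and both $K_{\lfloor\frac{n-1}{2}\rfloor,\lceil\frac{n-1}{2}\rceil}^{r,r}$ and the graphs in $\mathscr{G}_1(B_2)$ contain several coexisting non-typical vertices without containing a book. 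More fundamentally, $B_{r+1}$-freeness plus a maximum cut cannot by themselves yield any constant bound on defective vertices: $T_{n,2}$ minus a perfect matching is bipartite, hence book-free, and every vertex is defective. The constant bound must come from the extremality of $e(G)$, and the paper obtains it by a mechanism absent from your plan. After bounding the low-degree set $L$ and the set $W$ of vertices with many neighbours in their own part by $O(\sqrt{\epsilon}\,n)$, and showing the shortest odd cycle $C$ is a triangle, it proves $L\subseteq V(C)$ by rewiring: if $v_0\in L\setminus V(C)$, delete all edges at $v_0$ and join $v_0$ to all of $\widetilde{T}=T\setminus(L\cup W)$; the new graph is still non-bipartite (it contains $C$), still $B_{r+1}$-free (any book at $v_0$ could be rerouted through a common neighbour in $S$ of $v_0$'s new neighbours, which exist in abundance), yet has strictly more edges, contradicting extremality. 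This is the step that confines all defects to the three vertices of one triangle, and nothing in your proposal substitutes for it.

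The second gap is the exact count and equality analysis, which you defer to ``careful edge accounting'' and yourself flag as the hard part. Once $L\subseteq V(C)$ is known the paper does this cleanly: writing $G^{\star}=G\setminus V(C)$ with $C=w_1w_2w_3w_1$, $N_{G^{\star}}(w_1)\subseteq S^{\star}$ and $N_{G^{\star}}(w_2)\subseteq T^{\star}$, the book constraint applied to the edges $w_1w_3$ and $w_2w_3$ (each of which already has $w_2$, respectively $w_1$, as a common neighbour of its endpoints) gives $d_{S^{\star}}(w_1)+d_{S^{\star}}(w_3)\leq|S^{\star}|+r-1$ and $d_{T^{\star}}(w_2)+d_{T^{\star}}(w_3)\leq|T^{\star}|+r-1$, hence $e(V(C),V(G^{\star}))\leq n+2r-5$ and
\begin{equation*}
e(G)\leq 3+(n+2r-5)+\Big\lfloor\tfrac{(n-3)^{2}}{4}\Big\rfloor=\Big\lfloor\tfrac{(n-1)^{2}}{4}\Big\rfloor+2r.
\end{equation*}
Forcing equality throughout yields $G^{\star}\cong T_{n-3,2}$ and pins down the neighbourhoods of $w_1,w_2,w_3$, which is precisely where the dichotomy between $\mathscr{G}_1(B_2)$ for $r=1$ and the unique extremal graph for $r\geq2$ emerges. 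Without these two ingredients your plan does not close.
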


From Theorems \ref{th2} and \ref{main}, one can observe that the Tur\'{a}n numbers and extremal graphs are very different for $r=0$ and general $r\geq1$.

Let $A(G)$ be the adjacency matrix of $G$. The largest eigenvalue of $A(G)$, denoted by $\rho(G)$, is called the spectral radius of $G$. Let $\mathrm{Spex}(n, H)$ be the set of $n$-vertex $H$-free graphs with maximum spectral radius. In 2022, Cioab\u{a}, Desai and Tait \cite{CDT} proposed the following conjecture.

\begin{conj}[\!\cite{CDT}]\label{con}
If $H$ is a graph such that the graphs in $\mathrm{Ex}(n,H)$ are Tur\'{a}n graph plus $O(1)$ edges, then $\mathrm{Spex}(n,H)\subseteq\mathrm{Ex}(n,H)$ for sufficiently large $n$.
\end{conj}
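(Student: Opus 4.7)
The plan is to combine Nikiforov's spectral stability theorem with a careful analysis of the Perron eigenvector of a spectral extremal graph. Write $k=\chi(H)-1$, let $C=\mathrm{ex}(n,H)-e(T_{n,k})=O(1)$ (by hypothesis), and fix $G\in\mathrm{Spex}(n,H)$; the goal is to show $e(G)=\mathrm{ex}(n,H)$. Since $T_{n,k}$ is $H$-free ($\chi(T_{n,k})=k<\chi(H)$), we have $\rho(G)\geq\rho(T_{n,k})=(1-1/k)n-O(1)$, and Nikiforov's spectral stability theorem yields a partition $V(G)=V_1\cup\cdots\cup V_k$ with $\sum_i e(V_i)+\sum_{i<j}\bigl(|V_i||V_j|-e(V_i,V_j)\bigr)=o(n^2)$. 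Because $\mathrm{ex}(n,H)=e(T_{n,k})+O(1)$ and $G$ is $H$-free, an Erd\H{o}s--Simonovits-type cleanup sharpens this to $\bigl||V_i|-n/k\bigr|=O(1)$ for every $i$ and $O(1)$ edit distance between $G$ and the Tur\'{a}n graph supported on $(V_1,\ldots,V_k)$; in particular $e(G)=e(T_{n,k})+O(1)$.

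Next I would analyze the Perron eigenvector $x$ of $G$, normalized so that $\max_v x_v=1$. Propagating the eigenequation $\rho(G)x_v=\sum_{u\sim v}x_u$ through the nearly balanced $k$-partition, together with $\rho(G)=\Theta(n)$, forces $x_v\geq c>0$ for all but $O(1)$ vertices, with $x_v=\Omega(1/n)$ for the remaining exceptional ones. This quantitative regularity is what will make local edge swaps comparable against $\rho(G)$.

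Suppose for contradiction that $e(G)<\mathrm{ex}(n,H)$, and fix $G_0\in\mathrm{Ex}(n,H)$. I would identify the Tur\'{a}n template of $G_0$ with the partition $(V_1,\ldots,V_k)$ of $G$ after relocating $O(1)$ misplaced vertices; by the previous step the spectral cost of these relabellings is $O(1/n)$ per vertex, hence negligible compared with $\rho(G_0)-\rho(T_{n,k})$. The at most $C$ ``extra'' edges of $G_0$ (those lying outside its Tur\'{a}n template) can then be transplanted into $G$; each such added edge joins two vertices of positive Perron weight, so the Rayleigh quotient $x^\top A x/x^\top x$ strictly increases, yielding an $H$-free $G'$ with $\rho(G')>\rho(G)$ and contradicting $G\in\mathrm{Spex}(n,H)$.

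The main obstacle is the last step. While spectral stability easily pins $G$ down to an $O(1)$-edge neighbourhood of $T_{n,k}$, showing that the spectral optimum coincides with an \emph{edge-maximal} configuration requires verifying that the transplanted graph $G'$ remains $H$-free, which depends on the precise interaction between $H$ and the extra edges of $G_0$, and is where $H$-specific input must enter. Different placements of the $O(1)$ extra edges can yield different spectral radii, so one must rule out the possibility that $G$ is a ``spectrally good but edge-deficient'' configuration. This $H$-by-$H$ dependence is exactly why Conjecture \ref{con} remains open in full generality, and why existing confirmations (for books, fans, wheels and related color-critical families) proceed via $H$-tailored edge-swap lemmas rather than through a single unified argument.
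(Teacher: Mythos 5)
This statement is a conjecture quoted from \cite{CDT}; the paper offers no proof of it. It only records (Theorem \ref{th9}) that Wang, Kang and Xue \cite{WKX} resolved it completely, so there is no in-paper argument to compare yours against, and your closing assertion that the conjecture ``remains open in full generality'' contradicts the paper itself.

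Judged on its own terms, your proposal has a genuine gap, which you partly acknowledge. The first two stages (spectral stability forcing $G$ within $O(1)$ edits of $T_{n,k}$, and Perron-weight regularity) are standard and sound, but the contradiction step does not close. Transplanting the $O(1)$ ``extra'' edges of an edge-extremal $G_0$ into the spectral extremal $G$ need not produce an $H$-free graph: the extra edges of $G_0$ avoid creating $H$ only in the specific local environment of $G_0$, and $G$ may differ from $G_0$ exactly in that environment. More fundamentally, to derive a contradiction from $e(G)<\mathrm{ex}(n,H)$ you must exhibit an $H$-free graph with strictly larger spectral radius; the natural witness is $G_0$ itself, but both $\rho(G)$ and $\rho(G_0)$ equal $\rho(T_{n,k})+O(1/n)$, so the comparison hinges on second-order eigenvalue estimates that your Rayleigh-quotient heuristic (``each added edge joins two vertices of positive weight, so $\rho$ increases'') does not supply --- adding an edge to a \emph{different} graph says nothing about how $\rho(G)$ compares to $\rho(G_0)$. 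Controlling exactly this trade-off, uniformly in $H$ and without $H$-by-$H$ tailoring, is the content of the Wang--Kang--Xue theorem, and it is the piece missing from your argument.
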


Until now, there has been many positive evidences for Conjecture \ref{con} such as $W_5$ \cite{CDT}, complete graphs \cite{Nikiforov2007,G1996}, friendship graphs \cite{CFTZ,ZLX}, intersecting cliques \cite{DKL,MLZ}, intersecting odd cycles \cite{LP,MLZ}, $k$-edge-disjoint-triangles \cite{LZZ2022} and color-critical graphs \cite{N2009}. Recently, Wang, Kang and Xue \cite{WKX} completely solved Conjecture \ref{con} by proving a stronger theorem.
\begin{theorem}[\!\cite{WKX}]\label{th9}
Let $r\geq2$ be an integer, and $H$ be a graph with $\mathrm{ex}(n, H)=e(T_{n, r})+O(1).$ Then $\mathrm{Spex}(n,H)\subseteq\mathrm{Ex}(n,H)$ for sufficiently large $n$.
\end{theorem}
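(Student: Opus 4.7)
The plan is to take an arbitrary spectral extremal graph $G^{\ast}\in\mathrm{Spex}(n,H)$ and show $G^{\ast}\in\mathrm{Ex}(n,H)$. From the hypothesis $\mathrm{ex}(n,H)=e(T_{n,r})+O(1)$ together with the Erd\H{o}s--Stone theorem one first deduces $\chi(H)=r+1$, so $T_{n,r}$ is $H$-free. Using $T_{n,r}$ as a test graph then gives $\rho(G^{\ast})\geq\rho(T_{n,r})\geq(1-\tfrac{1}{r})n-O(1)$. Combined with the $H$-free edge bound $e(G^{\ast})\leq e(T_{n,r})+O(1)$ and a standard spectral-edge inequality, this forces $e(G^{\ast})$ to differ from $e(T_{n,r})$ by at most $o(n^{2})$.

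Next, I would apply the Erd\H{o}s--Simonovits stability theorem to obtain an $r$-partition $V(G^{\ast})=V_{1}\cup\cdots\cup V_{r}$ with $|V_{i}|=n/r+o(n)$ for each $i$ and only $o(n^{2})$ ``bad'' edges, counting both edges inside some $V_{i}$ and missing cross-edges. Let $\mathbf{x}$ be the Perron eigenvector of $G^{\ast}$, normalised by $\max_{v}x_{v}=1$. Plugging the partition into the eigenvalue equation $\rho(G^{\ast})x_{v}=\sum_{u\sim v}x_{u}$, one shows that all but $O(1)$ vertices have $x_{v}$ bounded below by a positive constant, and that the Perron entries on these ``heavy'' vertices are nearly constant within each $V_{i}$.

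The bulk of the argument is a cleaning-and-swap step. For each ``light'' vertex $v$, one redirects its incident edges onto heavy vertices of an appropriate class so that the new neighbourhood of $v$ sits entirely in the union of $r-1$ classes; this operation preserves $H$-freeness, because the modified local structure at $v$ is $r$-chromatic and $H$ with $\chi(H)=r+1$ cannot embed through $v$. By the near-constancy of heavy Perron weights, the modification does not decrease $\rho$. After cleaning, if any cross-edge between two heavy sets $V_{i}$ and $V_{j}$ is still missing, then adding it still yields an $H$-free graph because the local structure remains Tur\'{a}n-like; by Perron--Frobenius, adding the edge strictly increases $\rho$.

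Finally, assume for contradiction that $G^{\ast}\notin\mathrm{Ex}(n,H)$, so $e(G^{\ast})<\mathrm{ex}(n,H)$. The cleaning-and-swap procedure then outputs an $H$-free graph $G'$ on $n$ vertices with $\rho(G')>\rho(G^{\ast})$, contradicting spectral extremality of $G^{\ast}$; hence $G^{\ast}\in\mathrm{Ex}(n,H)$. The main obstacle is verifying $H$-freeness throughout the swap step, particularly in handling the $O(1)$ genuinely exceptional vertices whose local structure deviates from Tur\'{a}n; the strong hypothesis $\mathrm{ex}(n,H)=e(T_{n,r})+O(1)$ is indispensable here, as it pins every near-extremal $H$-free graph to $T_{n,r}$ up to a constant-sized set of critical edges whose positions can be tracked via the Perron weights.
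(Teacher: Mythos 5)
First, note that the paper does not prove Theorem \ref{th9}; it is quoted from Wang, Kang and Xue \cite{WKX}, so your proposal can only be measured against the known strategy of that paper rather than against a proof given here. Your opening reductions are broadly in the right spirit (deduce $\chi(H)=r+1$ via Erd\H{o}s--Stone, use $T_{n,r}$ as a test graph, pass to stability), but the step ``a standard spectral-edge inequality forces $e(G^{\ast})=e(T_{n,r})+o(n^{2})$'' does not work as stated: the inequality $\rho\le\sqrt{2e(G^{\ast})}$ only yields $e(G^{\ast})\ge(1-\tfrac1r)^{2}\tfrac{n^{2}}{2}-O(n)$, which falls strictly short of $(1-\tfrac1r)\tfrac{n^{2}}{2}-o(n^{2})$ for every $r\ge2$. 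What is actually needed at this point is Nikiforov's spectral stability theorem for $H$-free graphs with $\chi(H)=r+1$, which is the tool that \cite{WKX} and the earlier papers in this line invoke.

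The two serious gaps are in the endgame. First, the claim that after cleaning one may insert any missing cross-edge between two classes ``and still be $H$-free because the local structure remains Tur\'an-like'' is unjustified and false in general: the hypothesis controls $\mathrm{ex}(n,H)$ only up to an additive constant, so near-extremal $H$-free graphs need not contain a spanning complete $r$-partite subgraph, and a cross-edge may be absent precisely because inserting it would complete a copy of $H$ through the $O(1)$ exceptional edges whose positions you have not pinned down. Second, and more fundamentally, your final contradiction never uses the assumption $e(G^{\ast})<\mathrm{ex}(n,H)$: the cleaning procedure produces the same graph $G'$ whether or not $G^{\ast}$ is edge-extremal, so at best you conclude that $G^{\ast}$ is already ``clean,'' not that it has $\mathrm{ex}(n,H)$ edges. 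The crux of the Wang--Kang--Xue argument is a direct comparison of $\rho(G^{\ast})$ with $\rho(G^{e})$ for an actual graph $G^{e}\in\mathrm{Ex}(n,H)$, using the Perron weights and the common dominant $r$-partite structure of both graphs to show that a deficit of even one edge forces $\rho(G^{\ast})<\rho(G^{e})$; this comparison step is entirely absent from your proposal and cannot be recovered from the swap argument alone.
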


Initially, Zhai and Lin \cite{ZL2023} investigated the spectral Tur\'{a}n problem of $B_{r+1}$ in general graphs on $n$ vertices.
\begin{theorem}[\!\cite{ZL2023}]\label{th10}
Let $n$ and $r$ be integers with $n\geq\frac{13}{2}r$. If $G$ is a $B_{r+1}$-free graph of order $n$, then
$\rho(G)\leq\rho(T_{n,2})$ with equality if and only if $G\cong T_{n,2}$.
\end{theorem}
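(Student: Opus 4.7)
The plan is to combine a Perron-vector double-count with the edge bound of Theorem~\ref{th6}. Let $G$ be a $B_{r+1}$-free graph on $n\ge\tfrac{13}{2}r$ vertices and assume $\rho:=\rho(G)\ge\rho(T_{n,2})$; the aim is to conclude $G\cong T_{n,2}$. Let $\mathbf{x}$ be the Perron eigenvector of $A(G)$, scaled so that $\max_{w}x_{w}=x_{u}=1$ for some vertex $u$, and set $A=N(u)$, $B=V(G)\setminus(A\cup\{u\})$, and $d:=d(u)=|A|$. The eigenvalue equation at $u$ gives $\rho=\sum_{v\in A}x_{v}\le d$, so $d\ge\rho\ge\rho(T_{n,2})\ge\tfrac{n-1}{2}$; and since every $v\in A$ is joined to $u$ while $G$ is $B_{r+1}$-free, the codegree constraint forces $d_{A}(v)=|N(u)\cap N(v)|\le r$ for all $v\in A$, so in particular $e(G[A])\le rd/2$.

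A second application of the eigenvalue equation yields the exact identity
\[
\rho^{2}=d+\sum_{v\in A}d_{A}(v)x_{v}+\sum_{v\in B}d_{A}(v)x_{v},
\]
and the codegree bound controls the middle term by $\sum_{v\in A}d_{A}(v)x_{v}\le r\sum_{v\in A}x_{v}=r\rho$. For the last term I would first use the crude $x_{v}\le 1$ on $B$ to get $\sum_{v\in B}d_{A}(v)x_{v}\le e(A,B)$, and then invoke Theorem~\ref{th6} (valid since $n\ge\tfrac{13}{2}r>6r$) to deduce $e(G)\le\lfloor n^{2}/4\rfloor=\rho(T_{n,2})^{2}$ and hence $e(A,B)\le\lfloor n^{2}/4\rfloor-d-e(G[A])$. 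Substituting and using $d\ge\rho\ge\tfrac{n-1}{2}$ together with $d(n-d)\le\lfloor n^{2}/4\rfloor$ collapses everything to a quadratic inequality in $\rho$ that should close at $\rho\le\rho(T_{n,2})$. The main obstacle is exactly this closure: the naive chain loses a spurious additive $r\rho$, and absorbing it requires simultaneously refining either the middle-term bound (by observing that $\sum_{v\in A}d_A(v)x_v$ and $e(G[A])$ compete with $e(A,B)$, so they cannot all be large at once) or the last-term bound (by using the eigenvalue equation $\rho x_{v}\le d_A(v)+d_B(v)$ at each $v\in B$ to sharpen the trivial bound $\sum_{v\in B}x_{v}\le|B|$). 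This balancing act is where the explicit threshold $n\ge\tfrac{13}{2}r$ is consumed.

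For the equality case $\rho=\rho(T_{n,2})$, every inequality above must be tight: $\sum_{v\in A}d_{A}(v)x_{v}=0$ forces $e(G[A])=0$, so $A$ is an independent set; $x_{v}=1$ for every $v\in B$; $d\in\{\lfloor n/2\rfloor,\lceil n/2\rceil\}$; and $e(G)=\lfloor n^{2}/4\rfloor$. The uniqueness clause of Theorem~\ref{th6} then identifies $G$ with $T_{n,2}$, completing the proof.
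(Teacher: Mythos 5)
Note first that Theorem~\ref{th10} is quoted from \cite{ZL2023}; the present paper contains no proof of it, so your proposal can only be judged on its own terms. Your setup is the standard local one and every identity you write down is correct: with $x_u=1$ maximal, $A=N(u)$, $B=V(G)\setminus(A\cup\{u\})$ and $d=|A|$, one indeed has $\rho=\sum_{v\in A}x_v\le d$, the codegree bound $d_A(v)\le r$ for $v\in A$ (otherwise the edge $uv$ lies on $r+1$ triangles), and the walk identity $\rho^2=d+\sum_{v\in A}d_A(v)x_v+\sum_{v\in B}d_A(v)x_v$. But the chain you then run bottoms out at
\[
\rho^{2}\;\le\;\Big\lfloor\tfrac{n^{2}}{4}\Big\rfloor+r\rho-e(G[A]),
\]
which is perfectly consistent with $\rho>\rho(T_{n,2})$ and only yields $\rho\le \tfrac n2+O(r)$. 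You say this yourself (``the naive chain loses a spurious additive $r\rho$'') and propose to absorb it by ``balancing'' the terms $\sum_{v\in A}d_A(v)x_v$, $e(G[A])$ and $e(A,B)$ against one another, but you never exhibit the balancing inequality. That absorption is not a routine tightening --- it is the entire content of the theorem, and it is precisely where the threshold $n\ge\tfrac{13}{2}r$ (as opposed to the $6r$ of Theorem~\ref{th6}) must be earned. As written, the proposal establishes only a statement of the form $\rho(G)\le\rho(T_{n,2})+O(r)$.

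The equality discussion compounds the gap. If the inequality $\sum_{v\in A}d_A(v)x_v\le r\rho$ is among those that ``must be tight,'' then tightness forces this sum to equal $r\rho$, not $0$; the conclusion $e(G[A])=0$ follows only from the hypothetical refined inequalities you did not supply, so both the bound and the uniqueness clause rest on the missing step. (Two minor points that are fine: invoking Theorem~\ref{th6} is legitimate since $n\ge\tfrac{13}{2}r\ge 6r$, and $\rho(T_{n,2})^2=\lfloor n^2/4\rfloor$ is correct.) To complete the argument you would need to carry out the refinement concretely, e.g.\ by feeding the eigenvalue equation at the vertices of $B$ back into $\sum_{v\in B}d_A(v)x_v$ and quantifying the trade-off with $e(G[A])$, which is essentially what \cite{ZL2023} does over several pages.
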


Note that the above extremal graph $T_{n,2}$ is bipartite. For $r=0$, Lin, Ning and Wu \cite{LNW} characterized the spectral extremal graph in non-bipartite $C_3$-free graphs. For $r\geq1$, Liu and Miao \cite{LM} determined the maximum spectral radius in non-bipartite $B_{r+1}$-free graphs of order $n$ and characterized the spectral extremal graph. Let $\mathrm{Spex}_{3}(n, B_{r+1})$ be the set of spectral extremal graphs.
\begin{theorem}[\!\cite{LM}]\label{th11}
Let $r\geq1$ and $n\geq8(r^{2}+r+2)$ be integers. Then
\begin{eqnarray*}
\mathrm{Spex}_{3}(n, B_{r+1})=\big\{K_{\lfloor\frac{n-1}{2}\rfloor,\lceil\frac{n-1}{2}\rceil}^{r, r}\big\}.
\end{eqnarray*}
\end{theorem}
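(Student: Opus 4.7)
The plan is to combine a Perron-eigenvector analysis with spectral stability around the bipartite Turán graph $T_{n,2}$. Let $G$ be a spectral extremal graph in the class of non-bipartite $B_{r+1}$-free graphs on $n$ vertices; fix a unit Perron eigenvector $\mathbf{x}$ of $A(G)$ and write $\rho:=\rho(G)$. First I would verify that the candidate graph $K_{\lfloor(n-1)/2\rfloor,\lceil(n-1)/2\rceil}^{r,r}$ lies in this class: it is non-bipartite because $v_0$ together with a neighbour in each part of the complete bipartite piece forms a triangle, and it is $B_{r+1}$-free because every edge lies in at most $r$ triangles — a cross-edge of the bipartite part has at most the single common neighbour $v_{0}$, while each edge incident to $v_0$ has exactly $r$ common neighbours in the opposite part. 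This yields the lower bound $\rho \ge \rho\bigl(K_{\lfloor(n-1)/2\rfloor,\lceil(n-1)/2\rceil}^{r,r}\bigr)\ge\sqrt{\lfloor(n-1)/2\rfloor\lceil(n-1)/2\rceil}$, pushing $\rho$ to within $O(1)$ of $(n-1)/2$.

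Next I would establish bipartite stability. Since $G$ is $B_{r+1}$-free, Theorem \ref{th10} gives $\rho\le \rho(T_{n,2})\le n/2$, so $\rho$ is pinned into a window of width $O(1)$ around $(n-1)/2$. Combining this with the standard Rayleigh/degree estimates $\rho^{2}\le \sum_{v}d(v)x_v^{2}$ and the eigenequation $\rho x_v=\sum_{w\in N(v)}x_w$, I would produce a bipartition $V(G)=A\cup B$ with $|A|,|B|=(n\pm O(1))/2$ for which only $O(1)$ edges lie inside the parts and only $O(1)$ cross-edges are missing. A symmetrization step — deleting low-weight monochromatic edges and adding back absent cross-edges whenever this keeps $G$ non-bipartite and $B_{r+1}$-free, using that these moves cannot decrease $\rho$ at a spectral maximizer — then shows that all deviation from bipartiteness is concentrated on a bounded set of ``defect'' vertices.

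The main structural step, which I expect to be the hardest, is to reduce this defect set to a single vertex $v_{0}$. The obstacle is that the argument must simultaneously preserve (i) non-bipartiteness, so at least one defect vertex must remain, and (ii) $B_{r+1}$-freeness, which for each incident edge of a defect vertex restricts the number of common neighbours in the opposite part to at most $r$. The proposed move: if two distinct defect vertices $v_{0},v_{0}'$ exist, detach $v_{0}'$ from its monochromatic neighbourhood and re-attach it bipartitely on the side with larger Perron-weight sum; using $B_{r+1}$-freeness to bound how much of $v_{0}'$'s neighbourhood participates in triangles through any one edge, a direct Perron-vector comparison $\rho'\mathbf{x}^{\top}\mathbf{x}\ge \mathbf{x}^{\top}A(G')\mathbf{x}>\mathbf{x}^{\top}A(G)\mathbf{x}=\rho\,\mathbf{x}^{\top}\mathbf{x}$ should give $\rho(G')>\rho(G)$, contradicting extremality. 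The book constraint here must be checked edge by edge, which is the delicate part of the proof.

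Once a unique defect vertex $v_{0}$ has been isolated, $G-v_{0}$ is bipartite and $B_{r+1}$-free on $n-1$ vertices, so by Theorem \ref{th10} applied to $G-v_{0}$ together with the lower bound on $\rho(G)$ from the first step, $G-v_{0}$ must be the balanced complete bipartite graph $K_{\lfloor(n-1)/2\rfloor,\lceil(n-1)/2\rceil}$; any missing cross-edge could be added, and any imbalance $||A|-|B||\ge 2$ could be corrected by moving a vertex, each operation strictly increasing $\rho$ while preserving both constraints. Finally, $v_{0}$ can have at most $r$ neighbours in each part (otherwise any incident cross-edge would be in at least $r+1$ triangles), and maximising $\rho$ among configurations where $v_{0}$ has $s$ neighbours on one side and $t$ on the other — with $\max(s,t)\le r$ and $(s,t)\neq(0,0),(s,0),(0,t)$ to preserve non-bipartiteness — is resolved by a short Perron-entry calculation favouring $s=t=r$. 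This forces $G\cong K_{\lfloor(n-1)/2\rfloor,\lceil(n-1)/2\rceil}^{r,r}$, completing the proof.
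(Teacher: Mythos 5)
First, a point of reference: the paper does not prove Theorem \ref{th11} at all; it is quoted verbatim from the cited work of Liu and Miao \cite{LM}, so there is no in-paper proof to compare yours against. Your sketch follows the route one would expect such a proof to take (lower bound from the candidate graph, spectral stability around $T_{n,2}$, local refinement, and a final Perron-entry optimization), and several pieces are correct as stated: the verification that $K_{\lfloor\frac{n-1}{2}\rfloor,\lceil\frac{n-1}{2}\rceil}^{r, r}$ is non-bipartite and $B_{r+1}$-free, the resulting lower bound $\rho(G)\geq\sqrt{\lfloor\frac{n-1}{2}\rfloor\lceil\frac{n-1}{2}\rceil}$, and the concluding optimization forcing $s=t=r$ once the rest of the structure is known.

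However, as a proof the sketch has genuine gaps at its two load-bearing joints. (i) The passage from ``$\rho$ is pinned within $O(1)$ of $n/2$'' to ``only $O(1)$ monochromatic edges and $O(1)$ missing cross-edges'' is asserted, not proved; Nikiforov-type spectral stability only delivers edit distance $\epsilon n^{2}$ from $T_{n,2}$, and closing the gap from $\epsilon n^{2}$ to $O(1)$ is precisely where the bulk of such proofs lives (minimum-degree and Perron-entry estimates, control of a small set of exceptional vertices, and so on). Your ``symmetrization step'' is also misstated: deleting edges never increases $\mathbf{x}^{\top}A\mathbf{x}$, so the legitimate move is a net-positive exchange in which the Perron weight of the added cross-edges strictly exceeds that of the deleted monochromatic edges, and each such exchange must be checked to preserve both $B_{r+1}$-freeness and non-bipartiteness. (ii) The claim that Theorem \ref{th10} applied to $G-v_{0}$, ``together with the lower bound on $\rho(G)$,'' forces $G-v_{0}\cong K_{\lfloor\frac{n-1}{2}\rfloor,\lceil\frac{n-1}{2}\rceil}$ does not follow: deleting a vertex can decrease the spectral radius, interlacing gives no usable lower bound on $\rho(G-v_{0})$ in terms of $\rho(G)$, and the uniqueness part of Theorem \ref{th10} is available only at exact equality. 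Moreover, ``any missing cross-edge could be added'' is not automatically legal: if both endpoints of the missing edge lie in $N(v_{0})$, the new edge creates an additional triangle on an edge at $v_{0}$ that may already carry $r$ triangles, producing a $B_{r+1}$. Each of these steps needs an actual argument rather than the acknowledgement that it is delicate, so the proposal is an outline of a plausible strategy rather than a proof.
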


Note that $K_{\lfloor\frac{n-1}{2}\rfloor,\lceil\frac{n-1}{2}\rceil}^{1, 1}\in\mathscr{G}_1(B_{2})$ by setting $S_1=T_1=\varnothing$. By Theorems \ref{main} and \ref{th11}, an immediate corollary follows directly.

\begin{corollary}
Let $r\geq1$ and $n$ be sufficiently large. If $G$ is a non-bipartite $B_{r+1}$-free graph of order $n$, then
$$\mathrm{Spex}_{3}(n, B_{r+1})\subseteq\mathrm{Ex}_{3}(n, B_{r+1}).$$
\end{corollary}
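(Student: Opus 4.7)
The plan is to unpack the two preceding theorems and observe that the corollary reduces to a simple set-inclusion check in each of the two cases that appear in Theorem \ref{main}. No new argument of substance is required; the statement really is an immediate consequence.

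First, I would handle the case $r\geq 2$. Here Theorem \ref{th11} gives $\mathrm{Spex}_3(n, B_{r+1}) = \{K_{\lfloor (n-1)/2\rfloor,\lceil (n-1)/2\rceil}^{r,r}\}$ as soon as $n\geq 8(r^2+r+2)$, and Theorem \ref{main} gives $\mathrm{Ex}_3(n, B_{r+1}) = \{K_{\lfloor (n-1)/2\rfloor,\lceil (n-1)/2\rceil}^{r,r}\}$ for all sufficiently large $n$. Choosing $n$ large enough that both hypotheses apply simultaneously, the two sets are literally identical, and the inclusion holds trivially.

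Next, for $r=1$, Theorem \ref{th11} yields $\mathrm{Spex}_3(n, B_2) = \{K_{\lfloor (n-1)/2\rfloor,\lceil (n-1)/2\rceil}^{1,1}\}$, while Theorem \ref{main} yields $\mathrm{Ex}_3(n, B_2) = \mathscr{G}_1(B_2)$. The containment now requires one structural verification: that $K_{\lfloor (n-1)/2\rfloor,\lceil (n-1)/2\rceil}^{1,1}$ is a member of $\mathscr{G}_1(B_2)$. I would verify this by exhibiting the construction explicitly. In the definition of $\mathscr{G}_1(B_2)$, take $S_1=T_1=\varnothing$ (which respects the constraint $|S_1||T_1|\leq 1$): then the attached triangle $C_3 = w_1w_2w_3w_1$ contributes no edges from $w_3$ to $S^{\star}\cup T^{\star}$, while $w_1$ is joined to every vertex of $S^{\star}$ and $w_2$ to every vertex of $T^{\star}$. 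Together with the edge $w_1w_2$ of $C_3$, the bipartition $(S^{\star}\cup\{w_2\},\, T^{\star}\cup\{w_1\})$ of the $n-1$ vertices other than $w_3$ becomes complete bipartite of sizes $\lfloor (n-1)/2\rfloor$ and $\lceil (n-1)/2\rceil$, and $w_3$ is adjacent (via the $C_3$-edges $w_3w_1$ and $w_3w_2$) to exactly one vertex in each part. This matches the definition of $K_{\lfloor (n-1)/2\rfloor,\lceil (n-1)/2\rceil}^{1,1}$ on the nose, so this graph lies in $\mathscr{G}_1(B_2) = \mathrm{Ex}_3(n, B_2)$ and the inclusion follows.

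There is no real obstacle: the corollary is a bookkeeping consequence of Theorems \ref{main} and \ref{th11}, and the only non-trivial step is the structural identification in the $r=1$ case, which the authors already flag in the parenthetical remark immediately preceding the corollary.
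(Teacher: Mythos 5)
Your proposal is correct and follows essentially the same route as the paper: the authors also dispose of the corollary by noting that for $r\geq2$ the two sets coincide by Theorems \ref{main} and \ref{th11}, and that for $r=1$ one has $K_{\lfloor\frac{n-1}{2}\rfloor,\lceil\frac{n-1}{2}\rceil}^{1,1}\in\mathscr{G}_1(B_2)$ via the choice $S_1=T_1=\varnothing$. Your explicit verification of that membership is a slightly more detailed write-up of exactly the observation the paper makes in the sentence preceding the corollary.
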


\section{Proof of Theorem \ref{main}}\label{se2}
In this section, we first present some important results, which are essential to the proof of our main theorem.
The following is the classical Erd\H{o}s-Simonovits stability theorem.

\begin{lemma}[\!\cite{E1966,E1968,S1968}]\label{lem3}
Let $H$ be a graph with $\chi(H)=r+1\geq3$. For every $\epsilon>0$, there exist a constant $\delta>0$ and an integer $n_0$ such that if $G$ is an $H$-free graph on $n\geq n_0$ vertices with $e(G)\geq(1-\frac{1}{r}-\delta)\frac{n^{2}}{2}$, then $G$ can be obtained from $T_{n,r}$ by adding and deleting at most $\epsilon n^{2}$ edges.
\end{lemma}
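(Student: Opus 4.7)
The plan is to derive the statement from the Erdős-Stone-Simonovits theorem together with a supersaturation argument applied to an optimal $r$-partition of $V(G)$.

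First, since $\chi(H)=r+1$, there exists an integer $t=t(H)$ such that $H$ is a subgraph of the complete $(r+1)$-partite graph $K_{r+1}(t,\ldots,t)$. By the Erdős-Stone-Simonovits theorem,
\begin{equation*}
\mathrm{ex}(n, K_{r+1}(t,\ldots,t)) = \Bigl(1-\tfrac{1}{r}+o(1)\Bigr)\tfrac{n^{2}}{2},
\end{equation*}
so any graph on $n$ vertices with significantly more edges contains $K_{r+1}(t,\ldots,t)$ and hence a copy of $H$. This theorem is the tool that will later let me extract $H$ from sufficiently many ``wrong'' edges. Before doing so, I would pass from $G$ to a subgraph $G'$ of nearly the same order but with large minimum degree, by iteratively deleting any vertex whose degree in the current graph is less than $(1-\tfrac{1}{r}-\sqrt{\delta})\,\tilde n$, where $\tilde n$ is the current order. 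A routine edge-count shows that at most $O(\sqrt{\delta}\,n)$ vertices are removed before the process halts, leaving $G'$ on $n'=n-O(\sqrt{\delta}\,n)$ vertices with minimum degree at least $(1-\tfrac{1}{r}-\sqrt{\delta})n'$ and edge density still within $O(\sqrt{\delta})$ of $1-\tfrac{1}{r}$.

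Next I would choose an $r$-partition $V_1,\ldots,V_r$ of $V(G')$ that maximises $\sum_{i<j}e(V_i,V_j)$. Maximality forces $d_{V_i}(v)\leq d_{V_j}(v)$ for every $v\in V_i$ and $j\neq i$, and combining this with the minimum-degree bound implies $|V_i|=n/r+o(n)$ while the average intra-part degree is $o(n)$. The central step is to show $\sum_{i}e(V_i)\leq\tfrac{\epsilon}{2}n^{2}$. Suppose the contrary; then some part, say $V_1$, carries $\Omega(n^{2})$ internal edges. Using that almost every vertex of $V_1$ has $\bigl(\tfrac{1}{r}-o(1)\bigr)n$ neighbours in each $V_j$ with $j\neq1$, an averaging argument shows that a positive fraction of the intra-$V_1$ edges $\{u,v\}$ satisfy $|N(u)\cap N(v)\cap V_j|=\Omega(n)$ for every $j\geq 2$ simultaneously. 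Applying Erdős-Stone inside the $(r-1)$-partite subgraph induced on $\bigcap_{j\geq 2}(N(u)\cap N(v)\cap V_j)$ produces a copy of $K_{r}(t,\ldots,t)$, which together with the edge $uv$ extends to a copy of $K_{r+1}(t,\ldots,t)\supseteq H$, contradicting $H$-freeness. Hence $\sum_{i}e(V_i)\leq\tfrac{\epsilon}{2}n^{2}$.

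Finally, I would bound the edit distance from $G$ to $T_{n,r}$. The number of edges of $G'$ to delete to make it $r$-partite on the parts $V_1,\ldots,V_r$ is $\sum_{i}e(V_i)\leq\tfrac{\epsilon}{2}n^{2}$, while the number of non-edges between distinct parts, $\sum_{i<j}(|V_i||V_j|-e(V_i,V_j))$, is controlled by the total edge-deficit $(1-\tfrac{1}{r}){n'}^{2}/2 - e(G')$ together with the imbalance of the $|V_i|$'s, both of which are $o(n^{2})$. Rebalancing the parts to the sizes of $T_{n',r}$ costs only $o(n^{2})$ more edge edits, and reinserting the $O(\sqrt{\delta}\,n)$ low-degree vertices costs at most $O(\sqrt{\delta}\,n^{2})$. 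Choosing $\delta$ small enough in terms of $\epsilon$ makes the total edit distance at most $\epsilon n^{2}$, as required. The main obstacle is the supersaturation step: converting ``many intra-part edges in the optimal partition'' into ``an embedded copy of $H$''. The delicate point is to guarantee, for a positive fraction of the bad edges, that the $r-1$ induced common neighbourhoods are all simultaneously large; this is the role of the minimum-degree reduction and the maximality of the partition, and it forces the careful hierarchy $\delta\ll\sqrt{\delta}\ll\epsilon$.
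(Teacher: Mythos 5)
First, note that the paper does not prove this statement at all: it is quoted verbatim as the classical Erd\H{o}s--Simonovits stability theorem, with the proof delegated to the cited references. So your attempt can only be judged on its own merits. The architecture you choose (minimum-degree reduction, a max-$r$-cut partition, a supersaturation step to bound intra-part edges, and a final edit-distance accounting) is a legitimate and standard modern route to this theorem, and the reduction and accounting steps are essentially fine.

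The genuine gap is in the central embedding step. The sets $W_j=N(u)\cap N(v)\cap V_j$ for $j=2,\dots,r$ are $r-1$ pairwise disjoint sets (your ``$\bigcap_{j\ge 2}$'' should be a union), and the $(r-1)$-partite graph they span contains no $K_r$ whatsoever, hence no $K_r(t,\dots,t)$; Erd\H{o}s--Stone cannot produce one there, since the cross-part density is at most $1-\frac{1}{r-1}+o(1)$, i.e.\ at or below the relevant threshold, and any surplus would have to come from intra-part edges --- exactly the quantity you are trying to bound, making the argument circular. Moreover, even the correct transversal object, a $K_{r-1}(t,\dots,t)$ with one class in each $W_j$, combined with the single edge $uv$ only yields $K_{r+1}(1,1,t,\dots,t)$, which need not contain $H$: for $H=K_{r+1}(2,\dots,2)$ one has $\chi(H)=r+1$ but every proper $(r+1)$-colouring of $H$ has all classes of size $2$, so $H\not\subseteq K_{r+1}(1,1,t,\dots,t)$. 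The missing idea is to upgrade the single intra-part edge to a complete bipartite $K_{t,t}$ inside $V_1$ --- e.g.\ by applying the K\H{o}v\'ari--S\'os--Tur\'an theorem to the $\Omega(n^2)$ intra-$V_1$ edges (restricted to endpoints of small intra-part degree so that the common cross-neighbourhoods stay large) --- and only then attach a transversal $K_{r-1}(T,\dots,T)$ found greedily in the nearly complete multipartite graph on $W_2,\dots,W_r$, giving $K_{r+1}(t,t,T,\dots,T)\supseteq K_{r+1}(t,\dots,t)\supseteq H$. As written, the step that converts many bad edges into a copy of $H$ fails, and with it the bound on $\sum_i e(V_i)$.
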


The following lemma can be proved by induction or double counting.

\begin{lemma}[\!\cite{CFTZ}]\label{lem4}
If $S_1,S_2,\ldots,S_k$ are $k$ finite sets, then $|S_1\cap S_2\cap\cdots\cap S_k|=\sum_{i=1}^{k}|S_i|-(k-1)|\bigcup_{i=1}^{k}S_i|$.
\end{lemma}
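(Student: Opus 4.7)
The plan is to prove the identity by induction on $k$, following the two approaches the authors hint at, and then verify consistency. The base case $k=2$ is just ordinary two‑set inclusion–exclusion: $|S_1 \cap S_2| = |S_1| + |S_2| - |S_1 \cup S_2|$, which is immediate.

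For the inductive step, set $T = \bigcap_{i=1}^{k} S_i$, $U_k = \bigcup_{i=1}^{k} S_i$, and $U_{k+1} = U_k \cup S_{k+1}$. Applying the two‑set identity to $T$ and $S_{k+1}$ gives $|T \cap S_{k+1}| = |T| + |S_{k+1}| - |T \cup S_{k+1}|$, and substituting the induction hypothesis $|T| = \sum_{i=1}^{k}|S_i| - (k-1)|U_k|$ yields $|T \cap S_{k+1}| = \sum_{i=1}^{k+1}|S_i| - (k-1)|U_k| - |T \cup S_{k+1}|$. The task then is to rewrite the trailing terms as $-k|U_{k+1}|$, using $T \subseteq U_k \subseteq U_{k+1}$ and $S_{k+1} \subseteq U_{k+1}$ to compare $|T \cup S_{k+1}|$ and $|U_k|$ against $|U_{k+1}|$.

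A cleaner one‑shot proof is by double counting. Let $U = \bigcup_{i=1}^{k} S_i$ and, for each $x \in U$, set $f(x) = |\{i : x \in S_i\}| \in \{1, \dots, k\}$. Then $\sum_{i=1}^{k}|S_i| = \sum_{x \in U} f(x)$. Split this sum according to whether $x \in \bigcap_{i=1}^{k} S_i$ (in which case $f(x)=k$) or not (in which case $f(x) \leq k-1$), and rearrange; this pins down the contribution of $|\bigcap_{i=1}^{k} S_i|$ versus $|U|$ in the desired way.

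The step I expect to be the main obstacle is that, as written, the relation is really a Bonferroni‑type bound rather than an equality: taking $S_1=\{1\}$, $S_2=\{2\}$, $S_3=\{3\}$ gives $\mathrm{LHS}=0$ but $\sum_i|S_i|-(k-1)|\bigcup_iS_i|=3-2\cdot 3=-3$. Both the induction and the double‑counting argument above in fact produce $\bigl|\bigcap_{i=1}^{k}S_i\bigr| \geq \sum_{i=1}^{k}|S_i| - (k-1)\bigl|\bigcup_{i=1}^{k}S_i\bigr|$, with equality precisely when every $x \in \bigcup_i S_i$ lies in at least $k-1$ of the sets. This inequality form is what will actually be applied in the forthcoming neighborhood analysis, so I would record the lemma with the inequality reading (or else with the extra hypothesis guaranteeing the equality) before substituting it into the proof of Theorem~\ref{main}.
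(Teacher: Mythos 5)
Your two proof sketches (induction from the two-set identity, and double counting via the multiplicity function $f(x)=|\{i: x\in S_i\}|$) are exactly the two routes the paper alludes to when it says the lemma ``can be proved by induction or double counting''; the paper gives no further details and simply cites \cite{CFTZ}, so there is no fuller argument to compare against. More importantly, your objection to the statement itself is correct: as an equality the lemma is false, and your counterexample $S_1=\{1\}$, $S_2=\{2\}$, $S_3=\{3\}$ (left side $0$, right side $3-2\cdot3=-3$) settles it. Both of your arguments correctly establish the Bonferroni-type inequality
\[
\Bigl|\bigcap_{i=1}^{k}S_i\Bigr|\;\geq\;\sum_{i=1}^{k}|S_i|-(k-1)\Bigl|\bigcup_{i=1}^{k}S_i\Bigr|,
\]
with equality precisely when every element of the union lies in at least $k-1$ of the sets, and your verification of the inductive step (using $|T\cup S_{k+1}|\leq|U_{k+1}|$ and $|U_k|\leq|U_{k+1}|$ to absorb the trailing terms into $-k|U_{k+1}|$) is sound. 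This inequality is the standard form of the lemma and is the only direction the paper ever uses: in Lemma~\ref{cla7} the union is further relaxed to the ambient set $S$, a step that is only valid for the ``$\geq$'' reading. So the statement of Lemma~\ref{lem4} should be emended to ``$\geq$''; with that correction your proof is complete and nothing downstream in the paper is affected.
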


Let $G$ be an arbitrary graph in $\mathrm{Ex}_{3}(n, B_{r+1})$. Next our goal is to obtain the asymptotic structure of $G$. First, we can claim that $K_{\lfloor\frac{n-1}{2}\rfloor,\lceil\frac{n-1}{2}\rceil}^{r, r}$ is $B_{r+1}$-free. In fact, assume that $K_{\lfloor\frac{n-1}{2}\rfloor,\lceil\frac{n-1}{2}\rceil}^{r, r}$ contains a copy of $B_{r+1}$, say $H$. Note that $K_{\lfloor\frac{n-1}{2}\rfloor,\lceil\frac{n-1}{2}\rceil}^{r, r}\backslash\{v_0\}$ is a bipartite graph. Then $v_0\in V(H).$ If $d_{H}(v_0)=2$, then $H\backslash\{v_0\}\cong B_{r}$ is a subgraph of $K_{\lfloor\frac{n-1}{2}\rfloor,\lceil\frac{n-1}{2}\rceil}^{r, r}\backslash\{v_0\}$, a contradiction. Hence $d_{H}(v_0)=r+2$. Since $H[N_{H}(v_0)]$ is isomorphic to $K_{1,r+1}$, $K_{1,r+1}$ is a subgraph of $G[N_{G}(v_0)]$, which contradicts that $G[N_{G}(v_0)]\cong K_{r,r}$ (see Fig. \ref{f4}). Hence $K_{\lfloor\frac{n-1}{2}\rfloor,\lceil\frac{n-1}{2}\rceil}^{r, r}$ is $B_{r+1}$-free. Note that $K_{\lfloor\frac{n-1}{2}\rfloor,\lceil\frac{n-1}{2}\rceil}^{r, r}$ is also non-bipartite. Then we have
\begin{eqnarray}\label{eq1}
e(G)\geq e\Big(K_{\lfloor\frac{n-1}{2}\rfloor,\lceil\frac{n-1}{2}\rceil}^{r,r}\Big)=\bigg\lfloor\frac{(n-1)^{2}}{4}\bigg\rfloor+2r\geq\frac{n^{2}}{4}-\frac{n}{2}+2r
=\bigg(\frac{1}{2}-\Big(\frac{1}{n}-\frac{4r}{n^{2}}\Big)\bigg)\frac{n^{2}}{2}.
\end{eqnarray}
Let $\epsilon$ be a sufficiently small constant satisfying
\begin{eqnarray}\label{eq1.0}
\max\Big\{60r\sqrt{\epsilon},~90\sqrt{\epsilon}\Big\}<1.
\end{eqnarray}

\begin{lemma}\label{cla1}
For every $\epsilon>0$, there exists an integer $n_0$ such that if $n\geq n_0$, then
\begin{eqnarray*}
e(G)\geq\Big(\frac{1}{4}-\epsilon\Big)n^{2}.
\end{eqnarray*}
Moreover, $G$ has a partition $V(G)=S\cup T$ which gives a maximum cut with
\begin{eqnarray*}
e(S)+e(T)\leq\epsilon n^{2}
\end{eqnarray*}
and
\begin{eqnarray*}
\Big(\frac{1}{2}-\frac{3}{2}\sqrt{\epsilon}\Big)n\leq |S|,|T|\leq\Big(\frac{1}{2}+\frac{3}{2}\sqrt{\epsilon}\Big)n.
\end{eqnarray*}
\end{lemma}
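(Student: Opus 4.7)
The plan has three parts, corresponding to the three conclusions of the lemma. The first bound $e(G)\geq(\tfrac14-\epsilon)n^2$ is essentially free: inequality (1) in the excerpt already gives $e(G)\geq n^2/4-n/2+2r$, so once $n$ is large enough that $n/2-2r\leq \epsilon n^2$, the desired estimate holds. The real content is the last two statements, which say that $G$ is essentially bipartite with balanced sides. For this I plan to invoke the Erd\H{o}s--Simonovits stability theorem (Lemma \ref{lem3}) applied with $H=B_{r+1}$, which is color-critical with $\chi(B_{r+1})=3$, so the relevant Tur\'an graph is $T_{n,2}$.

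Concretely, I will choose a small $\epsilon'$ (depending on $\epsilon$) and the corresponding $\delta$ from Lemma \ref{lem3}. For $n$ large enough, the lower bound just obtained gives $e(G)\geq(1-\tfrac12-\delta)\tfrac{n^2}{2}$, so the stability lemma produces a bipartition $V(G)=A\cup B$ of $G$ whose symmetric difference with an edge set of $T_{n,2}$ has size at most $\epsilon' n^2$; in particular, at most $\epsilon' n^2$ edges of $G$ lie inside $A$ or inside $B$. Now take $(S,T)$ to be a \emph{maximum} cut of $G$. Since the cut $(A,B)$ already leaves at most $\epsilon' n^2$ edges inside the parts, the same bound holds for the optimal cut, yielding $e(S)+e(T)\leq \epsilon' n^2\leq \epsilon n^2$ once $\epsilon'$ is chosen $\leq \epsilon$.

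For the balance of $|S|$ and $|T|$, I will combine the two estimates already obtained with the trivial inequality $e(S,T)\leq |S||T|$. Writing $|S|=\tfrac{n}{2}-t$ and $|T|=\tfrac{n}{2}+t$, so that $|S||T|=\tfrac{n^2}{4}-t^2$, we have
\begin{equation*}
\tfrac{n^2}{4}-t^2\geq e(S,T)=e(G)-e(S)-e(T)\geq\Big(\tfrac14-\epsilon\Big)n^2-\epsilon n^2=\Big(\tfrac14-2\epsilon\Big)n^2,
\end{equation*}
hence $t^2\leq 2\epsilon n^2$ and $|t|\leq \sqrt{2\epsilon}\,n<\tfrac{3}{2}\sqrt{\epsilon}\,n$, which is exactly the stated range for $|S|$ and $|T|$.

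I do not foresee a serious obstacle: everything is an appropriate packaging of Erd\H{o}s--Simonovits stability plus an elementary max-cut argument, and the constant $\tfrac{3}{2}$ in the balance bound is comfortably bigger than $\sqrt{2}$. The only care needed is in the quantifier order: one picks $\epsilon$ first, then $\epsilon'\leq\epsilon$, then extracts $\delta$ and $n_0$ from Lemma \ref{lem3}, and finally enlarges $n_0$ if necessary so that the lower bound from (1) beats the stability hypothesis threshold $n^2/4-\delta n^2/2$ and the error term $n/2-2r$ is absorbed into $\epsilon n^2$.
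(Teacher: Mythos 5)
Your proposal is correct and follows essentially the same route as the paper: invoke Erd\H{o}s--Simonovits stability to get a near-bipartition, pass to a maximum cut to preserve the bound $e(S)+e(T)\leq\epsilon n^2$, and then use $e(S,T)\leq|S||T|=\tfrac{n^2}{4}-t^2$ together with the edge lower bound to force $t^2\leq 2\epsilon n^2$. The only (harmless) difference is that you obtain $e(G)\geq(\tfrac14-\epsilon)n^2$ directly from inequality (1) rather than via the stability conclusion, which is if anything slightly cleaner.
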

\begin{proof}
Note that $\chi(B_{r+1})=3$. Combining (\ref{eq1}) and Lemma \ref{lem3}, for every $\epsilon>0$, we can take a large enough $n$ such that $G$ can be obtained from $T_{n,2}$ by adding and deleting at most $\epsilon n^{2}$ edges. Then $$e(G)\geq\Big(\frac{1}{4}-\epsilon\Big)n^{2},$$ and there exists a partition of $V(G)=S_0\cup T_0$ with $\lfloor\frac{n}{2}\rfloor\leq|S_0|\leq|T_0|\leq\lceil\frac{n}{2}\rceil$ such that $e(S_0)+e(T_0)\leq\epsilon n^{2}$. We choose a partition of $V(G)=S\cup T$ such that $e(S, T)$ is a maximum cut. Then we have $$e(S)+e(T)\leq e(S_0)+e(T_0)\leq\epsilon n^{2}.$$
Now we assume that $|S|=\frac{n}{2}-\beta$ and $|T|=\frac{n}{2}+\beta$ for some $\beta$. Then
\begin{eqnarray*}
e(G)=e(S, T)+e(S)+e(T)\leq|S||T|+e(S)+e(T)\leq\frac{n^{2}}{4}-\beta^{2}+\epsilon n^{2}.
\end{eqnarray*}
Recall that $e(G)\geq\Big(\frac{1}{4}-\epsilon\Big)n^{2}$. Then $\beta^{2}\leq2\epsilon n^{2}$. Hence $\beta<\frac{3}{2}\sqrt{\epsilon}n$, which implies that
\begin{eqnarray*}
\Big(\frac{1}{2}-\frac{3}{2}\sqrt{\epsilon}\Big)n\leq |S|,|T|\leq\Big(\frac{1}{2}+\frac{3}{2}\sqrt{\epsilon}\Big)n.
\end{eqnarray*}
Hence the lemma holds.
\end{proof}

In the following, we define two vertex subsets $L$ and $W$ of $G$ and estimate upper bounds of the number of vertices in $L$ and $W$, respectively.

\begin{lemma}\label{cla2}
Let $L=\big\{v\in V(G)\mid d_{G}(v)\leq\big(\frac{1}{2}-4\sqrt{\epsilon}\big)n\big\}$. Then $|L|\leq\sqrt{\epsilon}n$.
\end{lemma}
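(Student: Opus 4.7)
My plan is to argue by contradiction. Assuming $|L|>\sqrt{\epsilon}n$, I will derive a contradiction by combining the max-cut structure from Lemma~\ref{cla1} with the edge lower bound furnished by~(\ref{eq1}). The guiding intuition is that each $v\in L$ is so low-degree that it must ``miss'' many potential cross-edges between $S$ and $T$; if there are too many such vertices, the total number of edges drops below the target.

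To execute this, I would use the size bounds $|S|,|T|\ge(\tfrac{1}{2}-\tfrac{3}{2}\sqrt{\epsilon})n$ from Lemma~\ref{cla1}. For any $v\in L\cap S$, since $d_T(v)\le d(v)\le(\tfrac{1}{2}-4\sqrt{\epsilon})n$, the vertex $v$ has at least $|T|-(\tfrac{1}{2}-4\sqrt{\epsilon})n\ge\tfrac{5}{2}\sqrt{\epsilon}n$ non-neighbours in $T$, and symmetrically for $v\in L\cap T$. Summing over $L\cap S$ gives $e(S,T)\le|S||T|-\tfrac{5}{2}\sqrt{\epsilon}n\,|L\cap S|$, and summing over $L\cap T$ gives the analogous bound with $|L\cap T|$; averaging yields
$$
e(S,T)\le|S||T|-\tfrac{5}{4}\sqrt{\epsilon}n\,|L|.
$$
Plugging in $|S||T|\le n^{2}/4$ and $e(S)+e(T)\le\epsilon n^{2}$ from Lemma~\ref{cla1} yields
$$
e(G)\le\tfrac{n^{2}}{4}-\tfrac{5}{4}\sqrt{\epsilon}n\,|L|+\epsilon n^{2}.
$$
Comparing with $e(G)\ge\lfloor(n-1)^{2}/4\rfloor+2r\ge n^{2}/4-n/2$ from~(\ref{eq1}) and solving for $|L|$ gives $|L|\le\tfrac{4}{5}\sqrt{\epsilon}n+\tfrac{2}{5\sqrt{\epsilon}}$, which is at most $\sqrt{\epsilon}n$ once $n\ge 2/\epsilon$.

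The main subtlety — and the only step I would pause on — is that using only the weaker lower bound $e(G)\ge(\tfrac{1}{4}-\epsilon)n^{2}$ from Lemma~\ref{cla1}, the very same computation yields only $|L|\le\tfrac{8}{5}\sqrt{\epsilon}n$, which is not tight enough. The sharper edge count $\lfloor(n-1)^{2}/4\rfloor+2r$ from~(\ref{eq1}), which is itself a consequence of $K_{\lfloor(n-1)/2\rfloor,\lceil(n-1)/2\rceil}^{r,r}$ being a valid $B_{r+1}$-free non-bipartite competitor, supplies the extra $\Theta(n)$ in the edge count needed to close the estimate. Once that gain is used, the remaining algebra is routine and no further structural input beyond Lemma~\ref{cla1} is required.
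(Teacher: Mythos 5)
Your proof is correct, but it takes a genuinely different route from the paper's. The paper argues by contradiction in a vertex-deletion style: it removes a subset $L'\subseteq L$ of exactly $\lfloor\sqrt{\epsilon}n\rfloor$ vertices, lower-bounds $e(G\backslash L')$ by $e(G)-\sum_{v\in L'}d_G(v)\geq\frac{1}{4}(1-2\sqrt{\epsilon}+12\epsilon)n^2$ using only the weak bound $e(G)\geq(\frac14-\epsilon)n^2$, and then contradicts the Tur\'an-type upper bound $e(G\backslash L')\leq e(T_{n-\lfloor\sqrt{\epsilon}n\rfloor,2})$ supplied by Simonovits' theorem (Theorem \ref{th5}); notably, it never touches the partition $S\cup T$. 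You instead count non-edges across the maximum cut: each vertex of $L$ forces at least $\frac52\sqrt{\epsilon}n$ missing cross-pairs, and your averaging step correctly sidesteps the double-counting of pairs with both endpoints in $L$. Your accounting checks out, including the observation that the weak bound $(\frac14-\epsilon)n^2$ only yields $|L|\leq\frac85\sqrt{\epsilon}n$ and that the sharper count $e(G)\geq\frac{n^2}{4}-\frac{n}{2}+2r$ from (\ref{eq1}) is what closes the gap (giving $|L|\leq\frac45\sqrt{\epsilon}n+\frac{2}{5\sqrt{\epsilon}}\leq\sqrt{\epsilon}n$ once $n\geq 2/\epsilon$). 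The trade-off: your argument is more elementary, avoiding any appeal to the Tur\'an number of $B_{r+1}$ on the subgraph, but it depends on both conclusions of Lemma \ref{cla1} and on the stronger edge bound; the paper's version is shorter, needs less structural input, and would survive even if the competitor graph gave only $\frac{n^2}{4}-o(n^2)$ edges rather than $\frac{n^2}{4}-O(n)$.
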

\begin{proof}
Suppose to the contrary that $|L|>\sqrt{\epsilon}n$. Then there exists a subset $L'$ of $L$ such that $|L'|=\lfloor\sqrt{\epsilon}n\rfloor$.
By Lemma \ref{cla1} and the definition of $L$, we have
\begin{eqnarray*}
e(G\backslash L')\geq e(G)-\sum_{v\in L'}d_{G}(v)&\geq&\Big(\frac{1}{4}-\epsilon\Big)n^{2}-\sqrt{\epsilon}n^{2}\Big(\frac{1}{2}-4\sqrt{\epsilon}\Big)\\
&=&\frac{1}{4}(1-2\sqrt{\epsilon}+12\epsilon)n^{2}.
\end{eqnarray*}
Note that $G\backslash L'$ is $B_{r+1}$-free and $B_{r+1}$ is a color-critical graph with $\chi(B_{r+1})=3$. By Theorem \ref{th5}, for sufficiently large $n$, we have
\begin{eqnarray*}
e(G\backslash L')\leq e(T_{|V(G\backslash L')|,2})&\leq&\frac{(n-\lfloor\sqrt{\epsilon}n\rfloor)^{2}}{4}<\frac{(n-\sqrt{\epsilon}n+1)^{2}}{4}\\
&<&\frac{1}{4}(1-2\sqrt{\epsilon}+3\epsilon)n^{2},
\end{eqnarray*}
a contradiction. Hence $|L|\leq\sqrt{\epsilon}n$.
\end{proof}

\begin{lemma}\label{cla3}
Define $W_1=\big\{v\in S\mid d_{S}(v)\geq\frac{7}{2}\sqrt{\epsilon}n\big\}$ and $W_2=\big\{v\in T\mid d_{T}(v)\geq\frac{7}{2}\sqrt{\epsilon}n\big\}$. Let $W=W_1\cup W_2$. Then $|W|\leq\frac{4}{7}\sqrt{\epsilon}n$.
\end{lemma}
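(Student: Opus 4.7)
The plan is to prove Lemma \ref{cla3} via a straightforward double-counting argument that leverages the small-cut bound already established in Lemma \ref{cla1}. Specifically, recall from Lemma \ref{cla1} that the chosen partition satisfies $e(S)+e(T)\leq \epsilon n^{2}$. The sets $W_1$ and $W_2$ are precisely the vertices whose ``wrong-side'' degree is large, so their high degree inside their own part must be paid for out of this small budget of within-part edges.

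First I would sum the within-part degrees over $W_1$. Every vertex $v\in W_1$ satisfies $d_{S}(v)\geq \tfrac{7}{2}\sqrt{\epsilon}n$ by definition, and since $\sum_{v\in S} d_{S}(v)=2e(S)$, we get
\[
|W_1|\cdot \tfrac{7}{2}\sqrt{\epsilon}n \;\leq\; \sum_{v\in W_1} d_{S}(v) \;\leq\; 2e(S).
\]
An identical computation on the $T$ side yields $|W_2|\cdot \tfrac{7}{2}\sqrt{\epsilon}n \leq 2e(T)$. Adding these two inequalities and applying the bound $e(S)+e(T)\leq \epsilon n^{2}$ from Lemma \ref{cla1} gives
\[
|W|\cdot \tfrac{7}{2}\sqrt{\epsilon}n \;\leq\; 2\bigl(e(S)+e(T)\bigr) \;\leq\; 2\epsilon n^{2},
\]
and dividing by $\tfrac{7}{2}\sqrt{\epsilon}n$ yields $|W|\leq \tfrac{4}{7}\sqrt{\epsilon}n$, as desired.

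There is essentially no obstacle here: the lemma is a quantitative corollary of the maximum-cut partition inequality, and no extremal structure of $B_{r+1}$-free graphs needs to be invoked. The only care required is to keep the constants consistent with the threshold $\tfrac{7}{2}\sqrt{\epsilon}n$ used in the definition of $W_1,W_2$ so that the final bound matches $\tfrac{4}{7}\sqrt{\epsilon}n$. The disjointness of $W_1$ and $W_2$ (which holds because $S$ and $T$ partition $V(G)$) guarantees that $|W|=|W_1|+|W_2|$, so the two bounds can be added directly without inclusion–exclusion.
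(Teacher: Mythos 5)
Your proof is correct and is essentially identical to the paper's own argument: both sum the within-part degrees over $W_1$ and $W_2$, bound them by $2e(S)$ and $2e(T)$ respectively, and apply the bound $e(S)+e(T)\leq\epsilon n^{2}$ from Lemma \ref{cla1}. Nothing is missing.
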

\begin{proof}
By the definition of $W_1$ and $W_2$, we obtain that $2e(S)=\sum_{v\in S}d_{S}(v)\geq\sum_{v\in W_1}d_{S}(v)\geq\frac{7}{2}|W_1|\sqrt{\epsilon}n$ and
$2e(T)=\sum_{v\in T}d_{T}(v)\geq\sum_{v\in W_2}d_{T}(v)\geq\frac{7}{2}|W_2|\sqrt{\epsilon}n$. Hence
\begin{eqnarray}\label{eq2}
e(S)+e(T)\geq\frac{7}{4}(|W_1|+|W_2|)\sqrt{\epsilon}n=\frac{7}{4}|W|\sqrt{\epsilon}n.
\end{eqnarray}
By Lemma \ref{cla1}, $e(S)+e(T)\leq\epsilon n^{2}$. Combining (\ref{eq2}), we have $|W|\leq\frac{4}{7}\sqrt{\epsilon}n$.
\end{proof}

To determine the relationship between $L$ and $W$, we first prove the following lemma. Let $\widetilde{S}=S\backslash(L\cup W)$ and $\widetilde{T}=T\backslash(L\cup W)$.

\begin{lemma}\label{cla4}
For every vertex $u\in V(G)$ with $d_{G}(u)\geq20\sqrt{\epsilon}n$, either $N_{\widetilde{S}}(u)=\varnothing$ or $N_{\widetilde{T}}(u)=\varnothing$.
\end{lemma}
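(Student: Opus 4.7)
The plan is to argue by contradiction: assume $d_G(u) \geq 20\sqrt{\epsilon}n$ while both $N_{\widetilde{S}}(u)$ and $N_{\widetilde{T}}(u)$ are non-empty, fix any $s \in N_{\widetilde{S}}(u)$ and $t \in N_{\widetilde{T}}(u)$, and combine the near-completeness of the bipartite structure at $s$ and $t$ with $B_{r+1}$-freeness on the two edges $us$ and $ut$ to force $d_G(u)$ below the assumed threshold.

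The first step extracts from Lemmas \ref{cla1}, \ref{cla2}, and \ref{cla3} that $s$ is adjacent to almost every vertex of $T$. Since $s \in \widetilde{S} = S \setminus (L \cup W)$, the condition $s \notin L$ gives $d_G(s) \geq (\frac{1}{2} - 4\sqrt{\epsilon})n$, and $s \notin W_1$ gives $d_S(s) < \frac{7}{2}\sqrt{\epsilon}n$; subtracting yields $|N_T(s)| \geq (\frac{1}{2} - \frac{15}{2}\sqrt{\epsilon})n$, and combining with $|T| \leq (\frac{1}{2} + \frac{3}{2}\sqrt{\epsilon})n$ from Lemma \ref{cla1} gives $|T \setminus N_T(s)| \leq 9\sqrt{\epsilon}n$. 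The symmetric computation at $t$ produces $|S \setminus N_S(t)| \leq 9\sqrt{\epsilon}n$.

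The second step uses the $B_{r+1}$-free hypothesis: since $us \in E(G)$, the edge $us$ lies in at most $r$ triangles, so $|N(u) \cap N(s)| \leq r$, which in turn bounds $|N_T(u) \cap N_T(s)| \leq r$. Decomposing $N_T(u) = (N_T(u) \cap N_T(s)) \cup (N_T(u) \setminus N_T(s))$ and combining with the first step gives $|N_T(u)| \leq r + 9\sqrt{\epsilon}n$. The identical argument applied to the edge $ut$ bounds $|N_S(u)| \leq r + 9\sqrt{\epsilon}n$. Summing, $d_G(u) \leq 2r + 18\sqrt{\epsilon}n$, which is strictly less than $20\sqrt{\epsilon}n$ once $n > r/\sqrt{\epsilon}$; this contradicts the hypothesis on $u$.

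I do not foresee a serious obstacle in executing this plan; the argument is essentially forced once one notices that $s$'s $T$-neighbourhood covers $T$ up to a small deficit of size $9\sqrt{\epsilon}n$, and that the $B_{r+1}$-free condition converts the edge $us$ directly into an additive $r$-slack on $|N_T(u)|$. The only point of care is the calibration of constants ($9 + 9 = 18 < 20$, with room for the additive $2r$ to be absorbed for large $n$), which is precisely why the thresholds in Lemmas \ref{cla1}--\ref{cla3} and the constant $20$ in the hypothesis $d_G(u) \geq 20\sqrt{\epsilon}n$ are chosen as they are.
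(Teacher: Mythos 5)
Your proposal is correct and is essentially the paper's argument: both rest on the observation that a vertex of $\widetilde{S}$ (resp.\ $\widetilde{T}$) is adjacent to all but $O(\sqrt{\epsilon})n$ vertices of the opposite part, combined with the fact that $B_{r+1}$-freeness caps the number of common neighbours of two adjacent vertices at $r$. The only cosmetic difference is packaging: the paper assumes WLOG $d_S(u)\geq 10\sqrt{\epsilon}n$ and exhibits a book directly on the edge $uu_0$ via inclusion-exclusion, whereas you run the same count in contrapositive form on both edges $us$ and $ut$ to force $d_G(u)\leq 2r+18\sqrt{\epsilon}n<20\sqrt{\epsilon}n$.
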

\begin{proof}
Let $u$ be an arbitrary vertex in $V(G)$ with $d_{G}(u)\geq20\sqrt{\epsilon}n$. Then either $d_{S}(u)\geq10\sqrt{\epsilon}n$ or $d_{T}(u)\geq10\sqrt{\epsilon}n$. Without loss of generality, we assume that
\begin{eqnarray}\label{eq3}
d_{S}(u)\geq10\sqrt{\epsilon}n.
\end{eqnarray}
Next it suffices to prove that $N_{\widetilde{T}}(u)=\varnothing$. Suppose to the contrary that there exists a vertex $u_0\in N_{\widetilde{T}}(u)$. Recall that $\widetilde{T}=T\backslash(L\cup W)$. By the definitions of $L$ and $W_2$, $d_{G}(u_{0})>\big(\frac{1}{2}-4\sqrt{\epsilon}\big)n$ and $d_{T}(u_{0})<\frac{7}{2}\sqrt{\epsilon}n$. Then we obtain that
\begin{eqnarray}\label{eq4}
d_{S}(u_{0})=d_{G}(u_{0})-d_{T}(u_{0})>\Big(\frac{1}{2}-4\sqrt{\epsilon}\Big)n-\frac{7}{2}\sqrt{\epsilon}n>\Big(\frac{1}{2}-8\sqrt{\epsilon}\Big)n.
\end{eqnarray}
Combining (\ref{eq3}), (\ref{eq4}) and Lemma \ref{cla1}, for sufficiently large $n$, we have
\begin{eqnarray}\label{eq4.0}
|N_{S}(u)\cap N_{S}(u_{0})|&\geq&d_{S}(u)+d_{S}(u_{0})-|S|\nonumber\\
&>&10\sqrt{\epsilon}n+\Big(\frac{1}{2}-8\sqrt{\epsilon}\Big)n-\Big(\frac{1}{2}+\frac{3}{2}\sqrt{\epsilon}\Big)n\nonumber\\
&=&\frac{1}{2}\sqrt{\epsilon}n\geq r+1.
\end{eqnarray}
Recall that $u_0\in N_{\widetilde{T}}(u)$. Then $uu_{0}$ is an edge of $G$. Combining (\ref{eq4.0}), we can find that $G$ contains a book $B_{r+1}$, a contradiction.
\end{proof}

\begin{lemma}\label{cla5}
$W\subseteq L$.
\end{lemma}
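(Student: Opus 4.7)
The plan is a contradiction argument combining the max-cut choice of the partition $V(G)=S\cup T$ with Lemma \ref{cla4}. Suppose for contradiction that there is a vertex $v\in W\setminus L$, and without loss of generality $v\in W_1\subseteq S$.

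First I would show that $v$ is a legitimate input for Lemma \ref{cla4}. Since $v\notin L$, its degree satisfies $d_G(v)>\bigl(\tfrac{1}{2}-4\sqrt{\epsilon}\bigr)n$, and the smallness assumption (\ref{eq1.0}) on $\epsilon$ (concretely $\sqrt{\epsilon}<1/90$) makes $\bigl(\tfrac{1}{2}-4\sqrt{\epsilon}\bigr)n\geq 20\sqrt{\epsilon}n$. So Lemma \ref{cla4} tells us that $N_{\widetilde{S}}(v)=\varnothing$ or $N_{\widetilde{T}}(v)=\varnothing$. The core of the proof is to show that in fact \emph{both} neighbourhoods are nonempty, which will give the contradiction.

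The inequality $N_{\widetilde{S}}(v)\neq\varnothing$ comes directly from the definition of $W_1$: $d_S(v)\geq\tfrac{7}{2}\sqrt{\epsilon}n$, and only at most $|L\cup W|$ of these neighbours can lie outside $\widetilde{S}$. Using Lemmas \ref{cla2} and \ref{cla3}, $|L\cup W|\leq\sqrt{\epsilon}n+\tfrac{4}{7}\sqrt{\epsilon}n=\tfrac{11}{7}\sqrt{\epsilon}n$, which is strictly less than $\tfrac{7}{2}\sqrt{\epsilon}n$, so $d_{\widetilde{S}}(v)>0$. The only non-routine step is getting $N_{\widetilde{T}}(v)\neq\varnothing$, and for this I would invoke the max-cut property of the partition: since $v\in S$, moving $v$ to $T$ must not increase the number of crossing edges, so $d_T(v)\geq d_S(v)\geq\tfrac{7}{2}\sqrt{\epsilon}n$. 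The same subtraction argument then gives $d_{\widetilde{T}}(v)\geq \tfrac{7}{2}\sqrt{\epsilon}n-\tfrac{11}{7}\sqrt{\epsilon}n>0$, so $N_{\widetilde{T}}(v)\neq\varnothing$, contradicting Lemma \ref{cla4}.

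The main (and only) obstacle is remembering to use the max-cut property of the partition established in Lemma \ref{cla1}; without it one has no lower bound on $d_T(v)$ for $v\in W_1\cap S$, and the argument breaks. Everything else is a bookkeeping check that the constants $\sqrt{\epsilon}$, $\tfrac{7}{2}\sqrt{\epsilon}$, $4\sqrt{\epsilon}$, $20\sqrt{\epsilon}$ satisfy the needed inequalities under (\ref{eq1.0}). The case $v\in W_2\cap T$ is symmetric.
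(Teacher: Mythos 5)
Your proof is correct and follows essentially the same route as the paper: assume $v\in W_1\setminus L$, verify $d_G(v)>20\sqrt{\epsilon}n$, use the $W_1$-degree bound together with $|L|+|W|\leq\frac{11}{7}\sqrt{\epsilon}n$ to get $N_{\widetilde{S}}(v)\neq\varnothing$, and use the max-cut property to get $d_T(v)\geq d_S(v)$ and hence $N_{\widetilde{T}}(v)\neq\varnothing$, contradicting Lemma \ref{cla4}. The only cosmetic difference is that the paper bounds $d_T(v_0)\geq\frac{1}{2}d_G(v_0)>(\frac{1}{4}-2\sqrt{\epsilon})n$ while you use the weaker but equally sufficient $d_T(v)\geq d_S(v)\geq\frac{7}{2}\sqrt{\epsilon}n$.
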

\begin{proof}
Suppose that $W\backslash L\neq\varnothing$. Recall that $W=W_1\cup W_2$. Without loss of generality, we assume that $W_1\backslash L\neq\varnothing$. Then there exists a vertex $v_0\in W_1\backslash L$. By the definitions of $L$ and $W_1$, we have $d_{G}(v_0)>\big(\frac{1}{2}-4\sqrt{\epsilon}\big)n$ and $d_{S}(v_0)\geq\frac{7}{2}\sqrt{\epsilon}n$.
It follows from Lemmas \ref{cla2} and \ref{cla3} that
\begin{eqnarray}\label{eq5}
|N_{\widetilde{S}}(v_0)|\geq d_{S}(v_0)-|L|-|W|\geq\frac{7}{2}\sqrt{\epsilon}n-\sqrt{\epsilon}n-\frac{4}{7}\sqrt{\epsilon}n>0.
\end{eqnarray}
Note that $e(S, T)$ is a maximum cut and $v_0\in W_1\backslash L\subseteq S$. Then $d_{S}(v_0)\leq\frac{1}{2}d_{G}(v_0)$, and hence $d_{T}(v_0)=d_{G}(v_0)-d_{S}(v_0)\geq\frac{1}{2}d_{G}(v_0)>\big(\frac{1}{4}-2\sqrt{\epsilon}\big)n$.
By Lemmas \ref{cla2} and \ref{cla3},
\begin{eqnarray}\label{eq6}
|N_{\widetilde{T}}(v_0)|\geq d_{T}(v_0)-|L|-|W|\geq\Big(\frac{1}{4}-2\sqrt{\epsilon}\Big)n-\sqrt{\epsilon}n-\frac{4}{7}\sqrt{\epsilon}n>0.
\end{eqnarray}
Moreover, by (\ref{eq1.0}), we have
\begin{eqnarray}\label{eq7}
d_{G}(v_0)>\big(\frac{1}{2}-4\sqrt{\epsilon}\big)n>20\sqrt{\epsilon}n.
\end{eqnarray}
By (\ref{eq5}), (\ref{eq6}) and (\ref{eq7}), there exists a vertex $v_0\in V(G)$ with $d_{G}(v_0)>20\sqrt{\epsilon}n$ satisfying $N_{\widetilde{S}}(v_0)\neq\varnothing$ and $N_{\widetilde{T}}(v_0)\neq\varnothing$, which contradicts Lemma \ref{cla4}.
\end{proof}

Recall that $G$ is non-bipartite. Let $C$ be a shortest odd cycle of $G$. Now we determine the length of $C$.

\begin{lemma}\label{cla6}
$|V(C)|=3$.
\end{lemma}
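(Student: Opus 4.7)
The plan is to derive a contradiction directly from the edge-count lower bound (\ref{eq1}) combined with Erd\H{o}s' theorem for non-bipartite triangle-free graphs (Theorem \ref{th2}). The key observation is that the shortest odd cycle of $G$ having length at least $5$ would force $G$ to be triangle-free by minimality, at which point the classical Erd\H{o}s bound caps $e(G)$ below what the extremality of $G$ guarantees.

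Specifically, I would argue as follows. Suppose for contradiction that $|V(C)|\geq 5$. Since $C$ is a shortest odd cycle of $G$, the graph $G$ contains no triangle, so $G$ is $C_3$-free. Combined with the hypothesis that $G$ is non-bipartite, Theorem \ref{th2} yields
\[
e(G) \leq \left\lfloor\frac{(n-1)^{2}}{4}\right\rfloor + 1.
\]
On the other hand, (\ref{eq1}) gives
\[
e(G) \geq \left\lfloor\frac{(n-1)^{2}}{4}\right\rfloor + 2r \geq \left\lfloor\frac{(n-1)^{2}}{4}\right\rfloor + 2,
\]
where the last inequality uses $r\geq 1$. This contradicts the upper bound above, so $|V(C)|=3$.

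There is no real technical obstacle to this step; the proof is essentially a one-line comparison of two bounds, and it does not require any of the structural machinery built in Lemmas \ref{cla1}--\ref{cla5} (those lemmas will presumably be used at later stages, when the fine structure of $G$ near the partition $S\cup T$ has to be extracted). The only subtlety worth flagging is the hypothesis $r\geq 1$: it is precisely what separates the present setting from the case $r=0$ covered by Theorem \ref{th2}, and it is exactly the reason the paper's introduction emphasises that the extremal picture for $r=0$ differs fundamentally from that for $r\geq 1$.
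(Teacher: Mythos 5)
Your proof is correct and is essentially identical to the paper's own argument: both deduce triangle-freeness from the minimality of $C$, then contrast Erd\H{o}s' bound $e(G)\leq\big\lfloor\frac{(n-1)^{2}}{4}\big\rfloor+1$ from Theorem \ref{th2} with the lower bound $e(G)\geq\big\lfloor\frac{(n-1)^{2}}{4}\big\rfloor+2r\geq\big\lfloor\frac{(n-1)^{2}}{4}\big\rfloor+2$ from (\ref{eq1}) and $r\geq1$. Your remark that the structural lemmas are not needed for this step is also accurate.
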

\begin{proof}
If $|V(C)|\geq5$, then $G$ is a non-bipartite triangle-free graph. By Theorem \ref{th2}, $e(G)\leq\big\lfloor\frac{(n-1)^{2}}{4}\big\rfloor+1$. Combining (\ref{eq1}), we have $e(G)\geq\big\lfloor\frac{(n-1)^{2}}{4}\big\rfloor+2r$, a contradiction.
\end{proof}

In order to further investigate the relationship between $L$ and $V(C)$, we need to prove the following result.

\begin{lemma}\label{cla7}
Let $u_i\in\widetilde{T}$, where $2\leq i\leq r+2$. Then $u_1,\ldots,u_i$ have at least $\frac{1}{2}\sqrt{\epsilon}n$ common neighbours in $S$.
\end{lemma}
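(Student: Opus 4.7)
The plan is a direct multi-vertex extension of the two-vertex bound (\ref{eq4.0}) used in Lemma \ref{cla4}. I would apply the inclusion-style inequality from Lemma \ref{lem4} to the $i$ neighborhoods $N_S(u_1),\ldots,N_S(u_i)$, all viewed as subsets of the ambient set $S$.

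First I would show that every $u_j$ has large $S$-degree. Since $u_j\in\widetilde{T}=T\setminus(L\cup W)$, on the one hand $u_j\notin L$ gives $d_G(u_j)>(\tfrac12-4\sqrt{\epsilon})n$, and on the other hand $u_j\notin W_2$ gives $d_T(u_j)<\tfrac72\sqrt{\epsilon}\,n$. Subtracting, exactly as in (\ref{eq4}), yields $d_S(u_j)>(\tfrac12-8\sqrt{\epsilon})n$ for every $1\leq j\leq i$.

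Then, since each $N_S(u_j)\subseteq S$, Lemma \ref{lem4} applied with ambient set $S$ produces
\[
\bigl|N_S(u_1)\cap\cdots\cap N_S(u_i)\bigr| \;\geq\; \sum_{j=1}^{i} d_S(u_j) \,-\, (i-1)|S|.
\]
Inserting the lower bound on each $d_S(u_j)$, the upper bound $|S|\leq(\tfrac12+\tfrac32\sqrt{\epsilon})n$ from Lemma \ref{cla1}, and the hypothesis $i\leq r+2$, the right-hand side simplifies to at least $\tfrac{n}{2}-\bigl(\tfrac{19r}{2}+\tfrac{35}{2}\bigr)\sqrt{\epsilon}\,n$.

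Finally I would verify that this exceeds $\tfrac12\sqrt{\epsilon}\,n$, which reduces to checking $(36+19r)\sqrt{\epsilon}\leq 1$. The bounds $90\sqrt{\epsilon}<1$ and $60r\sqrt{\epsilon}<1$ from (\ref{eq1.0}) are tailored precisely for this: they give $36\sqrt{\epsilon}<2/5$ and $19r\sqrt{\epsilon}<19/60$, whose sum is comfortably below $1$. The argument is essentially mechanical once the correct subsets are identified; the only mild obstacle is tracking the constants from (\ref{eq1.0}), which were chosen exactly so that this chained Bonferroni-type bound survives all the way up to the critical size $i=r+2$ needed to subsequently locate a copy of $B_{r+1}$.
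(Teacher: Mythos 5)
Your proposal is correct and takes essentially the same route as the paper's proof: the same derivation of $d_{S}(u_j)>\big(\tfrac{1}{2}-8\sqrt{\epsilon}\big)n$ from $u_j\notin L\cup W_2$ (which the paper obtains by citing (\ref{eq4})), the same application of Lemma \ref{lem4} with ambient set $S$ bounded via Lemma \ref{cla1}, and the same final numerical check against (\ref{eq1.0}). Your arithmetic, including the reduction to $(19r+36)\sqrt{\epsilon}<1$, matches the paper's chain of inequalities, so there is nothing to add.
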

\begin{proof}
For $2\leq i\leq r+2$, it follows from (\ref{eq4}) that $d_{S}(u_{i})>\Big(\frac{1}{2}-8\sqrt{\epsilon}\Big)n$. Combining Lemmas \ref{lem4}, \ref{cla1} and (\ref{eq1.0}), we have
\begin{eqnarray*}
\Big|\bigcap_{k=1}^{i}N_{S}(u_{i})\Big|&\geq&\sum_{k=1}^{i}d_{S}(u_{i})-(i-1)|S|\\
&>&i\Big(\frac{1}{2}-8\sqrt{\epsilon}\Big)n-(i-1)\Big(\frac{1}{2}+\frac{3}{2}\sqrt{\epsilon}\Big)n\\
&\geq&\frac{1}{2}n-\frac{19}{2}(r+2)\sqrt{\epsilon}n+\frac{3}{2}\sqrt{\epsilon}n\\
&>&\frac{1}{2}\sqrt{\epsilon}n,
\end{eqnarray*}
which implies that $u_1,\ldots,u_i$ have at least $\frac{1}{2}\sqrt{\epsilon}n$ common neighbours in $S$.
\end{proof}

Now we can determine the relationship between $L$ and $V(C).$

\begin{lemma}\label{cla8}
$L\subseteq V(C)$.
\end{lemma}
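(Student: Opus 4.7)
Suppose for contradiction that $L \not\subseteq V(C)$, and fix a vertex $v \in L \setminus V(C)$. The plan is to build a new graph $G'$ by deleting $v$ from $G$ and introducing a fresh vertex $v'$ joined exactly to the vertices in $\widetilde{T}$, and then to derive a contradiction by showing that $G'$ is a non-bipartite $B_{r+1}$-free graph on $n$ vertices with strictly more edges than $G$.

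The first step is to establish that $\widetilde{T}$ is an independent set in $G$. If two vertices $t_{1}, t_{2} \in \widetilde{T}$ were joined by an edge, then Lemma~\ref{cla7} applied with $i = 2$ would furnish at least $\tfrac{1}{2}\sqrt{\epsilon}\,n \geq r+1$ common neighbours of $t_{1}, t_{2}$ in $S$ for $n$ sufficiently large, which together with the edge $t_{1}t_{2}$ would produce a copy of $B_{r+1}$ in $G$, contradicting its $B_{r+1}$-freeness.

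Next I would compare edge counts. By Lemmas~\ref{cla1}, \ref{cla2} and~\ref{cla5} (the last of which yields $|L \cup W| = |L| \leq \sqrt{\epsilon}\,n$) one obtains
\[
|\widetilde{T}| \;\geq\; |T| - |L| \;\geq\; \Big(\tfrac{1}{2} - \tfrac{5}{2}\sqrt{\epsilon}\Big)n,
\]
while $d_{G}(v) \leq \big(\tfrac{1}{2} - 4\sqrt{\epsilon}\big)n$ by definition of $L$. Hence $e(G') - e(G) = |\widetilde{T}| - d_{G}(v) > 0$. Since $v \notin V(C)$, the triangle $C$ is preserved in $G'$, so $G'$ remains non-bipartite on $n$ vertices.

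Finally, I would verify that $G'$ is $B_{r+1}$-free. Any copy of $B_{r+1}$ not involving $v'$ would already sit in $G - v \subseteq G$, which is forbidden; so $v'$ must lie in every putative copy. If $v'$ plays the role of a page vertex, both spine endpoints belong to $N_{G'}(v') = \widetilde{T}$ and are joined by an edge inherited from $G$, contradicting the independence of $\widetilde{T}$. If instead $v'$ is a spine vertex, the other spine vertex $x$ must lie in $N_{G'}(v') = \widetilde{T}$, and any common neighbour of $v'$ and $x$ would sit in $\widetilde{T} \cap N_{G}(x)$, which is empty again by the independence of $\widetilde{T}$; hence $v'$ and $x$ share no common neighbour at all, far from the required $r+1$. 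Both cases yield contradictions, so $G'$ is $B_{r+1}$-free with $e(G') > e(G)$, contradicting $G \in \mathrm{Ex}_{3}(n, B_{r+1})$. The only nontrivial ingredient is the independence of $\widetilde{T}$ via Lemma~\ref{cla7}; once that is in hand, the $B_{r+1}$-freeness of $G'$ is essentially forced, and the degree arithmetic is routine bookkeeping on the estimates collected in the earlier lemmas.
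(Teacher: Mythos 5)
Your proof is correct and follows essentially the same strategy as the paper: replace the low-degree vertex $v\in L\setminus V(C)$ by a vertex joined to all of $\widetilde{T}$, invoke Lemma~\ref{cla7} to rule out a new copy of $B_{r+1}$, and beat $e(G)$ using the degree bound from the definition of $L$. The only (harmless) variation is that you establish $B_{r+1}$-freeness by first showing $\widetilde{T}$ is independent, so the new vertex lies in no triangle, whereas the paper substitutes a common neighbour $u_0\in S$ for $v$ inside the putative copy; both steps rest on Lemma~\ref{cla7}.
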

\begin{proof}
Suppose to the contrary that $L\backslash V(C)\neq\varnothing$. Then there exists a vertex $v_0\in L\backslash V(C)$. By the definition of $L$, we know that
\begin{eqnarray}\label{eq8}
d_{G}(v_0)\leq\Big(\frac{1}{2}-4\sqrt{\epsilon}\Big)n.
\end{eqnarray}
Define $G'=G-\{v_0u\mid u\in N_{G}(v_0)\}+\big\{v_0u\mid u\in\widetilde{T}\backslash\{v_0\}\big\}$. Since $v_0\notin V(C)$, $G'$ still contains $C$ as a subgraph, and so $G'$ is non-bipartite. Now we claim that $G'$ is also $B_{r+1}$-free. Otherwise, $G'$ contains a copy of $B_{r+1}$, say $H$. Then $v_0\in V(H)$. Let $u_1,\ldots,u_i$ be the neighbours of $v_0$ in $H$, where $i=2$ or $r+2$. By the definition of $G',$ $u_1,\ldots,u_i\in\widetilde{T}$. By Lemma \ref{cla7}, $u_1,\ldots,u_i$ have at least $\frac{1}{2}\sqrt{\epsilon}n$ common neighbours in $S$. Hence we can select a vertex $u_0\in S\backslash V(H)$ such that $u_0$ is adjacent to $u_1,\ldots,u_i$. It is easy to see that $G[(V(H)\backslash\{v_0\})\cup\{u_0\}]$ contains a copy of $B_{r+1}$, a contradiction. Therefore, $G'$ is a non-bipartite $B_{r+1}$-free graph.

By Lemma \ref{cla1}, $|T|\geq\big(\frac{1}{2}-\frac{3}{2}\sqrt{\epsilon}\big)n$. Combining Lemmas \ref{cla2} and \ref{cla3}, we have
\begin{eqnarray}\label{eq9}
|\widetilde{T}\backslash\{v_0\}|\geq|T|-|L|-|W|-1\geq\big(\frac{1}{2}-\frac{3}{2}\sqrt{\epsilon}\big)n-\sqrt{\epsilon}n-\frac{4}{7}\sqrt{\epsilon}n-1>\Big(\frac{1}{2}-\frac{7}{2}\sqrt{\epsilon}\Big)n.
\end{eqnarray}
By (\ref{eq8}) and (\ref{eq9}), we obtain that $$e(G')=e(G)+|\widetilde{T}\backslash\{v_0\}|-d_{G}(v_0)>e(G)+\frac{1}{2}\sqrt{\epsilon}n>e(G),$$ which contradicts the maximality of $e(G).$ Hence $L\subseteq V(C)$.
\end{proof}

Let $G^{\star}=G\backslash V(C)$, $S^{\star}=S\backslash V(C)$ and $T^{\star}=T\backslash V(C)$. Combining Lemmas \ref{cla5}, \ref{cla8} and the definitions of $\widetilde{S}$ and $\widetilde{T}$, we know that $S^{\star}\subseteq\widetilde{S}$ and $T^{\star}\subseteq\widetilde{T}$. Recall that $|V(C)|=3$. Let $C=w_1w_2w_3w_1$. Without loss of generality, we assume that $d_{G^{\star}}(w_1)\geq d_{G^{\star}}(w_2)\geq d_{G^{\star}}(w_3)$. Now we estimate lower bounds of $d_{G^{\star}}(w_1)$ and $d_{G^{\star}}(w_2)$.

\begin{lemma}\label{cla9}
$d_{G^{\star}}(w_1)>\frac{n}{3}-2$ and $d_{G^{\star}}(w_2)\geq\frac{n}{4}-1$.
\end{lemma}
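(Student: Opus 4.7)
The plan is to obtain both bounds by a clean edge-counting argument that applies the classical book Tur\'an bound (Theorem~\ref{th6}) to two different $B_{r+1}$-free subgraphs of $G$: first to $G^\star$ itself, and then to the larger graph $G-\{w_2,w_3\}$.

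I would start from the exact identity
\begin{equation*}
e(G) \;=\; e(G^\star) + d_{G^\star}(w_1) + d_{G^\star}(w_2) + d_{G^\star}(w_3) + 3,
\end{equation*}
where the $+3$ accounts for the three edges of the triangle $C$. Since $G^\star$ is a subgraph of $G$, it is $B_{r+1}$-free on $n-3$ vertices, and Theorem~\ref{th6} yields $e(G^\star)\leq \lfloor (n-3)^2/4\rfloor$ for sufficiently large $n$. Combined with the lower bound $e(G)\geq \lfloor (n-1)^2/4\rfloor+2r$ from (\ref{eq1}) and the elementary floor identity $\lfloor (n-1)^2/4\rfloor - \lfloor (n-3)^2/4\rfloor = n-2$, this gives
\begin{equation*}
d_{G^\star}(w_1) + d_{G^\star}(w_2) + d_{G^\star}(w_3) \;\geq\; n+2r-5.
\end{equation*}
Since $d_{G^\star}(w_1)$ is the largest of the three degrees and $r\geq 1$, averaging yields $d_{G^\star}(w_1)\geq (n-3)/3 = n/3-1 > n/3-2$.

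For the second inequality, I would apply the same reasoning to $G-\{w_2,w_3\}$, which is again $B_{r+1}$-free (as a subgraph of $G$) and has $n-2$ vertices. Observing that $e(G-\{w_2,w_3\}) = e(G^\star)+d_{G^\star}(w_1)$ and $e(G)-e(G-\{w_2,w_3\}) = d_{G^\star}(w_2)+d_{G^\star}(w_3)+3$, Theorem~\ref{th6} together with (\ref{eq1}) and the identity $\lfloor (n-1)^2/4\rfloor - \lfloor (n-2)^2/4\rfloor = \lfloor (n-1)/2\rfloor$ yields
\begin{equation*}
d_{G^\star}(w_2)+d_{G^\star}(w_3) \;\geq\; \lfloor (n-1)/2\rfloor + 2r - 3,
\end{equation*}
which for $r\geq 1$ is at least $(n-4)/2$. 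Since $d_{G^\star}(w_2)\geq d_{G^\star}(w_3)$, halving gives $d_{G^\star}(w_2)\geq n/4-1$.

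I do not anticipate any serious obstacle: the whole argument is edge-accounting combined with the $B_{r+1}$-free Tur\'an bound, and no structural information beyond what is already established (notably, no use of the sets $L$, $W$, $\widetilde{S}$, $\widetilde{T}$ or of the max-cut partition) is required. The only real insight is that not only $G^\star$ but also the one-vertex-larger graph $G-\{w_2,w_3\}$ is $B_{r+1}$-free, and that these two applications of Theorem~\ref{th6} pin down $d_{G^\star}(w_1)$ and the pair $\{d_{G^\star}(w_2),d_{G^\star}(w_3)\}$ separately with just enough slack to yield the stated inequalities.
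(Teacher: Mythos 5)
Your proposal is correct and follows essentially the same route as the paper: the same edge decomposition of $e(G)$ over $V(C)$ and $G^{\star}$, the same auxiliary subgraph $G-\{w_2,w_3\}$, and the same application of the book Tur\'an bound to both, followed by averaging over the ordered degrees. The only cosmetic differences are that you invoke Theorem~\ref{th6} where the paper uses Theorem~\ref{th5} (either suffices) and that you track the floor identities exactly rather than bounding them, which changes nothing.
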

\begin{proof}
First, we consider the lower bound of $d_{G^{\star}}(w_1)$. Note that $d_{G}(w_i)=d_{G^{\star}}(w_i)+2$ for $i=1,2,3$. Then we have
\begin{eqnarray}\label{eq10}
e(G)=e(G^{\star})+\sum_{i=1}^{3}d_{G}(w_i)-3=e(G^{\star})+\sum_{i=1}^{3}d_{G^{\star}}(w_i)+3.
\end{eqnarray}
Since $G^{\star}$ is $B_{r+1}$-free and $|V(G^{\star})|=n-3$, we have $e(G^{\star})\leq\big\lfloor\frac{(n-3)^{2}}{4}\big\rfloor$ by Theorem \ref{th5}. By (\ref{eq1}), $e(G)\geq\lfloor\frac{(n-1)^{2}}{4}\big\rfloor+2r$. Combining (\ref{eq10}) and $r\geq1$, we obtain that
\begin{eqnarray*}
3d_{G^{\star}}(w_1)&\geq&\sum_{i=1}^{3}d_{G^{\star}}(w_i)=e(G)-e(G^{\star})-3\\
&\geq&\frac{n^{2}-2n}{4}+2r-\frac{(n-3)^{2}}{4}-3\\
&>&n-6.
\end{eqnarray*}
Hence $d_{G^{\star}}(w_1)>\frac{n}{3}-2$.

Next we shall estimate the lower bound of $d_{G^{\star}}(w_2)$. Define $G'=G-\{w_2,w_3\}$. Then
\begin{eqnarray}\label{eq10.0}
e(G)=e(G')+d_{G}(w_2)+d_{G}(w_3)-1=e(G')+d_{G^{\star}}(w_2)+d_{G^{\star}}(w_3)+3.
\end{eqnarray}
Note that $G'$ is $B_{r+1}$-free and $|V(G')|=n-2$. Then by Theorem \ref{th5}, $e(G')\leq\big\lfloor\frac{(n-2)^{2}}{4}\big\rfloor$. Combining (\ref{eq10.0}), $e(G)\geq\lfloor\frac{(n-1)^{2}}{4}\big\rfloor+2r$ and $r\geq1$, we have
\begin{eqnarray*}
2d_{G^{\star}}(w_2)&\geq&d_{G^{\star}}(w_2)+d_{G^{\star}}(w_3)=e(G)-e(G')-3\\
&\geq&\frac{n^{2}-2n}{4}+2r-\frac{(n-2)^{2}}{4}-3\\
&\geq&\frac{n}{2}-2,
\end{eqnarray*}
which means that $d_{G^{\star}}(w_2)\geq\frac{n}{4}-1$.
\end{proof}

By Lemma \ref{cla9} and (\ref{eq1.0}), $d_{G^{\star}}(w_1)>\frac{n}{3}-2>20\sqrt{\epsilon}n$ and $d_{G^{\star}}(w_2)\geq\frac{n}{4}-1>20\sqrt{\epsilon}n.$
It follows from Lemma \ref{cla4} that either $N_{\widetilde{S}}(w_{i})=\varnothing$ or $N_{\widetilde{T}}(w_{i})=\varnothing$ for $i=1,2$. Without loss of generality, assume that $N_{\widetilde{T}}(w_{1})=\varnothing$, that is, $N_{G^{\star}}(w_1)\subseteq S^{\star}.$ Now we can further prove that $N_{\widetilde{S}}(w_{2})=\varnothing$, which implies that $N_{G^{\star}}(w_2)\subseteq T^{\star}.$

\begin{lemma}\label{cla10}
$N_{\widetilde{S}}(w_{2})=\varnothing$.
\end{lemma}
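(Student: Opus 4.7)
The plan is to argue by contradiction: suppose that $N_{\widetilde{S}}(w_{2})\neq\varnothing$, and use Lemma \ref{cla4} together with the already established $N_{G^{\star}}(w_{1})\subseteq S^{\star}$ to force $w_{1}$ and $w_{2}$ to have a large common neighborhood inside $S^{\star}$, which then yields a forbidden $B_{r+1}$ glued along the edge $w_{1}w_{2}$.

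First I would verify the hypothesis of Lemma \ref{cla4} for $w_{2}$: by Lemma \ref{cla9} we have $d_{G^{\star}}(w_{2})\geq\frac{n}{4}-1$, so using (\ref{eq1.0}) we obtain $d_{G}(w_{2})\geq d_{G^{\star}}(w_{2})+2>20\sqrt{\epsilon}n$. Hence Lemma \ref{cla4} applied to $u=w_{2}$ says that either $N_{\widetilde{S}}(w_{2})=\varnothing$ or $N_{\widetilde{T}}(w_{2})=\varnothing$. Under the contradictory hypothesis $N_{\widetilde{S}}(w_{2})\neq\varnothing$, the second alternative must hold, so $N_{\widetilde{T}}(w_{2})=\varnothing$. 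Since $L\cup W\subseteq V(C)$ by Lemmas \ref{cla5} and \ref{cla8}, one has $V(G^{\star})=\widetilde{S}\cup\widetilde{T}$, and therefore $N_{G^{\star}}(w_{2})\subseteq\widetilde{S}=S^{\star}$. Combined with the standing assumption $N_{G^{\star}}(w_{1})\subseteq S^{\star}$, this gives $d_{S^{\star}}(w_{1})=d_{G^{\star}}(w_{1})>\frac{n}{3}-2$ and $d_{S^{\star}}(w_{2})=d_{G^{\star}}(w_{2})\geq\frac{n}{4}-1$.

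Next I would bound the common neighborhood of $w_{1}$ and $w_{2}$ in $S^{\star}$ by the standard inclusion bound
\begin{equation*}
|N_{S^{\star}}(w_{1})\cap N_{S^{\star}}(w_{2})|\geq d_{S^{\star}}(w_{1})+d_{S^{\star}}(w_{2})-|S^{\star}|.
\end{equation*}
Using Lemma \ref{cla1} to bound $|S^{\star}|\leq|S|\leq\bigl(\frac{1}{2}+\frac{3}{2}\sqrt{\epsilon}\bigr)n$ and the two degree estimates above, the right-hand side exceeds $\bigl(\tfrac{1}{3}+\tfrac{1}{4}-\tfrac{1}{2}\bigr)n-3-\tfrac{3}{2}\sqrt{\epsilon}n=\tfrac{n}{12}-3-\tfrac{3}{2}\sqrt{\epsilon}n$, which is at least $r+1$ for $\epsilon$ small enough (as in (\ref{eq1.0})) and $n$ sufficiently large. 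Choose $r+1$ vertices $u_{1},\ldots,u_{r+1}$ in $N_{S^{\star}}(w_{1})\cap N_{S^{\star}}(w_{2})$; since $w_{1}w_{2}$ is an edge of the triangle $C$, the triangles $w_{1}w_{2}u_{j}$ ($j=1,\ldots,r+1$) share the common edge $w_{1}w_{2}$, producing a copy of $B_{r+1}$ in $G$. This contradicts the $B_{r+1}$-freeness of $G$, completing the proof.

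The argument is essentially a one-step deduction from Lemmas \ref{cla4} and \ref{cla9}; the only thing to watch is the bookkeeping to make sure $\frac{n}{12}-O(\sqrt{\epsilon}n)\geq r+1$, which is the point where the choice of $\epsilon$ in (\ref{eq1.0}) together with $n$ large (relative to $r$) is used.
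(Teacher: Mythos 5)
Your proposal is correct and follows essentially the same route as the paper: apply Lemma \ref{cla4} (whose degree hypothesis is verified via Lemma \ref{cla9}) to force $N_{G^{\star}}(w_{2})\subseteq S^{\star}$, then combine the degree bounds on $w_{1},w_{2}$ with the upper bound on $|S|$ from Lemma \ref{cla1} to find at least $r+1$ common neighbours in $S^{\star}$, producing a $B_{r+1}$ on the edge $w_{1}w_{2}$. The only nitpick is that $\widetilde{S}=S^{\star}$ is not literally an equality (only $S^{\star}\subseteq\widetilde{S}$ holds, since $V(C)\cap S$ need not lie in $L\cup W$), but the inclusion $N_{G^{\star}}(w_{2})\subseteq S^{\star}$ you actually use still follows because $N_{G^{\star}}(w_{2})$ avoids $V(C)$ by definition.
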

\begin{proof}
Suppose to the contrary that $N_{\widetilde{S}}(w_{2})\neq\varnothing.$ Then $N_{\widetilde{T}}(w_{2})=\varnothing$, which implies that $N_{G^{\star}}(w_2)\subseteq S^{\star}\subseteq\widetilde{S}\subseteq S.$ Hence $|N_{G^{\star}}(w_2)|=|N_{S^{\star}}(w_2)|$. Recall that $|N_{G^{\star}}(w_1)|=|N_{S^{\star}}(w_1)|$. Combining (\ref{eq1.0}), Lemmas \ref{cla1} and \ref{cla9}, we obtain that
\begin{eqnarray*}
|N_{S^{\star}}(w_1)\cap N_{S^{\star}}(w_2)|&\geq&|N_{S^{\star}}(w_1)|+|N_{S^{\star}}(w_2)|-|S|\\
&>&\frac{n}{3}+\frac{n}{4}-3-\Big(\frac{1}{2}+\frac{3}{2}\sqrt{\epsilon}\Big)n\\
&\geq&\frac{n}{12}-2\sqrt{\epsilon}n>\frac{1}{2}\sqrt{\epsilon}n\\
&\geq&r+1,
\end{eqnarray*}
which implies that $G$ contains a $B_{r+1}$, a contradiction. Hence $N_{\widetilde{S}}(w_{2})=\varnothing$.
\end{proof}

Now we know that $N_{G^{\star}}(w_1)\subseteq S^{\star}$ and $N_{G^{\star}}(w_2)\subseteq T^{\star}$. Next we shall consider $N_{G^{\star}}(w_3)$. Note that $d_{G^{\star}}(w_3)=d_{S^{\star}}(w_3)+d_{T^{\star}}(w_3)$. Then we have the following result.

\begin{lemma}\label{cla11}
$d_{S^{\star}}(w_1)+d_{S^{\star}}(w_3)\leq|S^{\star}|+r-1$ and $d_{T^{\star}}(w_2)+d_{T^{\star}}(w_3)\leq|T^{\star}|+r-1$.
\end{lemma}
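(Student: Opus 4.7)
The plan is to prove both inequalities by a direct application of the $B_{r+1}$-free condition on the two edges of the triangle $C=w_1w_2w_3w_1$ that are incident with $w_3$, combined with a simple inclusion–exclusion identity.

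First I would handle the inequality $d_{S^{\star}}(w_1)+d_{S^{\star}}(w_3)\leq |S^{\star}|+r-1$. By inclusion–exclusion,
\begin{equation*}
d_{S^{\star}}(w_1)+d_{S^{\star}}(w_3)=\bigl|N_{S^{\star}}(w_1)\cup N_{S^{\star}}(w_3)\bigr|+\bigl|N_{S^{\star}}(w_1)\cap N_{S^{\star}}(w_3)\bigr|\leq |S^{\star}|+\bigl|N_{S^{\star}}(w_1)\cap N_{S^{\star}}(w_3)\bigr|,
\end{equation*}
so it suffices to show that $w_1$ and $w_3$ have at most $r-1$ common neighbours inside $S^{\star}$. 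For this I would use that $w_1w_3\in E(C)\subseteq E(G)$: since $G$ is $B_{r+1}$-free, the edge $w_1w_3$ cannot lie in $r+1$ triangles, so $|N_G(w_1)\cap N_G(w_3)|\leq r$. Because $C=w_1w_2w_3w_1$, the vertex $w_2$ already belongs to $N_G(w_1)\cap N_G(w_3)$, and $w_2\notin S^{\star}$ by definition of $S^{\star}=S\setminus V(C)$. Hence $N_{S^{\star}}(w_1)\cap N_{S^{\star}}(w_3)\subseteq(N_G(w_1)\cap N_G(w_3))\setminus\{w_2\}$ has cardinality at most $r-1$, and the first inequality follows.

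The second inequality $d_{T^{\star}}(w_2)+d_{T^{\star}}(w_3)\leq |T^{\star}|+r-1$ is completely symmetric: apply the same inclusion–exclusion identity to $N_{T^{\star}}(w_2)$ and $N_{T^{\star}}(w_3)$, invoke the $B_{r+1}$-free condition on the edge $w_2w_3\in E(C)$ to get $|N_G(w_2)\cap N_G(w_3)|\leq r$, and note that $w_1\in N_G(w_2)\cap N_G(w_3)$ with $w_1\notin T^{\star}$, yielding $|N_{T^{\star}}(w_2)\cap N_{T^{\star}}(w_3)|\leq r-1$.

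There is essentially no obstacle here; the argument is a one-line consequence of the definition of $B_{r+1}$-freeness (every edge lies in at most $r$ triangles) together with the observation that the third vertex of the triangle $C$ is already a common neighbour that sits outside $S^{\star}$ (resp.\ $T^{\star}$), which is what gives the saving of one unit in the bound $r-1$ rather than $r$.
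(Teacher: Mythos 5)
Your proposal is correct and follows essentially the same argument as the paper: both rest on the inclusion--exclusion bound $|N_{S^{\star}}(w_1)\cap N_{S^{\star}}(w_3)|\geq d_{S^{\star}}(w_1)+d_{S^{\star}}(w_3)-|S^{\star}|$, the fact that the edge $w_1w_3$ lies in at most $r$ triangles by $B_{r+1}$-freeness, and the observation that $w_2$ is already a common neighbour outside $S^{\star}$. The only difference is presentational: the paper argues by contradiction while you argue directly.
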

\begin{proof}
Assume that $d_{S^{\star}}(w_1)+d_{S^{\star}}(w_3)\geq|S^{\star}|+r$. Then
\begin{eqnarray*}
|N_{S^{\star}}(w_1)\cap N_{S^{\star}}(w_3)|&\geq&|N_{S^{\star}}(w_1)|+|N_{S^{\star}}(w_3)|-|S^{\star}|\geq r.
\end{eqnarray*}
Note that $w_1w_3\in E(G)$ and $w_2\in N_{G}(w_1)\cap N_{G}(w_3)$. Then $|N_{G}(w_1)\cap N_{G}(w_3)|\geq r+1$, which implies that $G$ contains a $B_{r+1}$, a contradiction.
Hence $d_{S^{\star}}(w_1)+d_{S^{\star}}(w_3)\leq|S^{\star}|+r-1$. Similarly, we can prove that $d_{T^{\star}}(w_2)+d_{T^{\star}}(w_3)\leq|T^{\star}|+r-1$.
\end{proof}

Now we are in a position to present the proof of Theorem \ref{main}.

\medskip
\noindent  \textbf{Proof of Theorem \ref{main}.}
Since $N_{G^{\star}}(w_1)\subseteq S^{\star}$ and $N_{G^{\star}}(w_2)\subseteq T^{\star}$, we have $d_{G^{\star}}(w_1)=d_{S^{\star}}(w_1)$ and $d_{G^{\star}}(w_2)=d_{T^{\star}}(w_2)$. Note that $|S^{\star}|+|T^{\star}|=n-3$. By Lemma \ref{cla11},
\begin{eqnarray}\label{eq11}
e\big(V(C),V(G^{\star})\big)&=&\sum_{i=1}^{3}d_{G^{\star}}(w_i)=d_{S^{\star}}(w_1)+d_{T^{\star}}(w_2)+d_{S^{\star}}(w_3)+d_{T^{\star}}(w_3)\nonumber\\
&\leq&|S^{\star}|+|T^{\star}|+2r-2=n+2r-5.
\end{eqnarray}
Note that $G^{\star}$ is $B_{r+1}$-free. By Theorem \ref{th5}, $e(G^{\star})\leq\big\lfloor\frac{(n-3)^{2}}{4}\big\rfloor$. Combining (\ref{eq11}), we have
\begin{eqnarray}\label{eq12}
e(G)&=&e(C)+e(V(C),V(G^{\star}))+e(G^{\star})\nonumber\\
&\leq&3+n+2r-5+\Big\lfloor\frac{(n-3)^{2}}{4}\Big\rfloor\nonumber\\
&=&\Big\lfloor\frac{(n-1)^{2}}{4}\Big\rfloor+2r.
\end{eqnarray}
By (\ref{eq1}), $e(G)\geq\big\lfloor\frac{(n-1)^{2}}{4}\big\rfloor+2r$. Combining (\ref{eq12}), we obtain that $e(G)=\big\lfloor\frac{(n-1)^{2}}{4}\big\rfloor+2r$. Hence the above all inequalities must be equalities. This implies that $G^{\star}\cong T_{n-3,2}$,
\begin{eqnarray}\label{eq13}
d_{S^{\star}}(w_1)+d_{S^{\star}}(w_3)=|S^{\star}|+r-1~~\mbox{and}~~d_{T^{\star}}(w_2)+d_{T^{\star}}(w_3)=|T^{\star}|+r-1.
\end{eqnarray}
Next we divide the proof into the following two cases according to different values of $r$.

\vspace{2mm}
\noindent{\bf Case 1.} $r=1$.
\vspace{2mm}

By (\ref{eq13}), we have $d_{S^{\star}}(w_1)+d_{S^{\star}}(w_3)=|S^{\star}|$ and $d_{T^{\star}}(w_2)+d_{T^{\star}}(w_3)=|T^{\star}|$. Note that $G$ is $B_2$-free, $w_1w_3\in E(G)$ and $w_2\in N_{G}(w_1)\cap N_{G}(w_3)$. Then $$N_{S^{\star}}(w_1)\cap N_{S^{\star}}(w_3)=\varnothing,$$
which implies that $$N_{S^{\star}}(w_1)\cup N_{S^{\star}}(w_3)=S^{\star}.$$
Similarly, we can prove that $$N_{T^{\star}}(w_2)\cap N_{T^{\star}}(w_3)=\varnothing~~\mbox{and}~~N_{T^{\star}}(w_2)\cup N_{T^{\star}}(w_3)=T^{\star}.$$
Let $S_1=N_{S^{\star}}(w_3)$ and $T_1=N_{T^{\star}}(w_3)$. Then $N_{S^{\star}}(w_1)=S^{\star}\backslash S_1$ and $N_{T^{\star}}(w_2)=T^{\star}\backslash T_1$ (see Fig. \ref{f5}). Next we shall prove that $|S_1||T_1|\leq1$. Suppose to the contrary that $|S_1||T_1|\geq2$. Since $G^{\star}\cong T_{n-3,2}$, there exists a path $P_3\subseteq G^{\star}[S_1\cup T_1]$. Then $G[V(P_3)\cup\{w_3\}]\cong B_2$, which contradicts that $G$ is $B_2$-free. Hence $G\in\mathscr{G}_1(B_2)$, which implies that
$\mathrm{Ex}_{3}(n,B_2)\subseteq\mathscr{G}_1(B_2)$.

\begin{figure}\label{f5}
\centering
\includegraphics[width=0.65\textwidth]{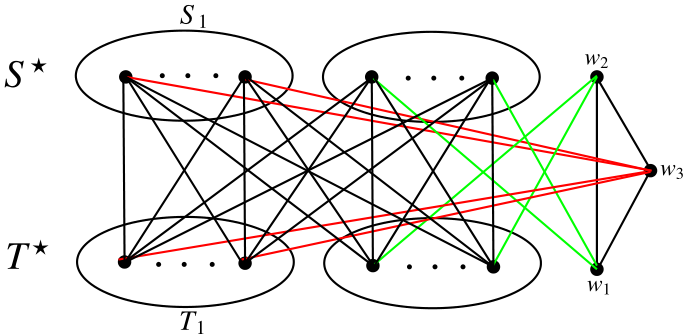}
\caption{Structure of the extremal graph $G$.}\label{f5}
\end{figure}

On the other hand, we shall prove that $\mathscr{G}_1(B_2)\subseteq \mathrm{Ex}_{3}(n,B_2)$. Let $G$ be an arbitrary graph in $\mathscr{G}_1(B_2)$. By the definition of $\mathscr{G}_1(B_2)$, $G$ is non-bipartite and $e(G)=\big\lfloor\frac{(n-1)^{2}}{4}\big\rfloor+2$. In the following, it suffices to prove that $G$ is $B_2$-free. Suppose to the contrary that $G$ contains a copy of $B_2$, say $H$. Since $G\backslash\{w_3\}$ is bipartite, $w_3\in V(H)$. If $d_{H}(w_3)=2$, then $H\backslash\{w_3\}\cong C_3$ is a subgraph of $G\backslash\{w_3\}$, a contradiction. Hence $d_{H}(w_3)=3.$ Then we have $H[N_{H}(w_3)]\cong P_3$, and hence $P_3\subseteq G\backslash\{w_3\}$. Recall that $G\backslash\{w_3\}$ is bipartite. Then $V(P_3)\cap S_1\neq\varnothing$ and $V(P_3)\cap T_1\neq\varnothing$, which implies that $|S_1||T_1|\geq2$. This contradicts that $|S_1||T_1|\leq1$. Hence $G$ is a non-bipartite $B_2$-free graph with $\big\lfloor\frac{(n-1)^{2}}{4}\big\rfloor+2$ edges, that is, $G\in \mathrm{Ex}_{3}(n, B_2)$. Therefore, $\mathscr{G}_1(B_2)\subseteq \mathrm{Ex}_{3}(n,B_2)$.

Combining the above two aspects, we have $\mathrm{Ex}_{3}(n,B_2)=\mathscr{G}_1(B_2)$ and $\mathrm{ex}_3(n, B_{2})=\big\lfloor\frac{(n-1)^{2}}{4}\big\rfloor+2$.

\vspace{2mm}
\noindent{\bf Case 2.} $r\geq2$.
\vspace{2mm}

According to (\ref{eq13}), we can obtain that
\begin{eqnarray}\label{eq14}
|N_{S^{\star}}(w_1)\cap N_{S^{\star}}(w_3)|\geq d_{S^{\star}}(w_1)+d_{S^{\star}}(w_3)-|S^{\star}|=r-1
\end{eqnarray}
and
\begin{eqnarray}\label{eq15}
|N_{T^{\star}}(w_2)\cap N_{T^{\star}}(w_3)|\geq d_{T^{\star}}(w_2)+d_{T^{\star}}(w_3)-|T^{\star}|=r-1.
\end{eqnarray}
Note that $G$ is $B_{r+1}$-free, $w_1w_3\in E(G)$ and $w_2\in N_{G}(w_1)\cap N_{G}(w_3)$. Then we have
\begin{eqnarray}\label{eq16}
|N_{S^{\star}}(w_1)\cap N_{S^{\star}}(w_3)|\leq r-1.
\end{eqnarray}
Similarly, we can obtain that
\begin{eqnarray}\label{eq17}
|N_{T^{\star}}(w_2)\cap N_{T^{\star}}(w_3)|\leq r-1.
\end{eqnarray}
It follows from (\ref{eq14})-(\ref{eq17}) that
\begin{eqnarray}\label{eq18}
|N_{S^{\star}}(w_1)\cap N_{S^{\star}}(w_3)|=r-1~~\mbox{and}~~|N_{T^{\star}}(w_2)\cap N_{T^{\star}}(w_3)|=r-1.
\end{eqnarray}

\begin{figure}\label{f6}
\centering
\includegraphics[width=0.65\textwidth]{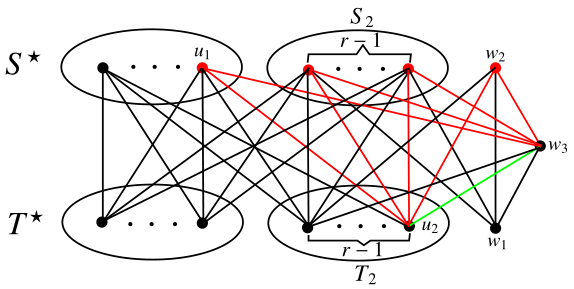}
\caption{A book $B_{r+1}$ in $G$.}\label{f6}
\end{figure}
Let $S_2=N_{S^{\star}}(w_1)\cap N_{S^{\star}}(w_3)$ and $T_2=N_{T^{\star}}(w_2)\cap N_{T^{\star}}(w_3)$. In the following, we further prove that $$N_{S^{\star}\backslash S_2}(w_3)=\varnothing.$$
Suppose to the contrary that there exists a vertex $u_1\in S^{\star}\backslash S_2$ such that $u_1$ is adjacent to $w_3$ (see Fig. \ref{f6}). Take a vertex $u_2\in T_2$. Then $u_2w_2$, $u_2w_3\in E(G).$ Recall that $G^{\star}\cong T_{n-3,2}$. Then every vertex in $S_2\cup\{u_1\}$ is adjacent to $u_2$. Hence $|N_{S^{\star}}(u_2)\cap N_{S^{\star}}(w_3)|\geq|S_2\cup\{u_1\}|=r$. Note that $w_2\in N_{G}(u_2)\cap N_{G}(w_3)$. Then we have $|N_{G}(u_2)\cap N_{G}(w_3)|\geq r+1$, which means that $G$ contains a $B_{r+1}$, a contradiction. Therefore, $N_{S^{\star}\backslash S_2}(w_3)=\varnothing$.
Combining (\ref{eq18}), we know that $d_{S^{\star}}(w_3)=r-1$. Furthermore, by (\ref{eq13}), we have $d_{S^{\star}}(w_1)=|S^{\star}|$.
Therefore, we can obtain that $$N_{S^{\star}}(w_1)=S^{\star}~~\mbox{and}~~N_{S^{\star}}(w_3)=S_{2}.$$
Similarly, we can prove that $N_{T^{\star}\backslash T_2}(w_3)=\varnothing.$ Furthermore, it follows that
$$N_{T^{\star}}(w_2)=T^{\star}~~\mbox{and}~~N_{T^{\star}}(w_3)=T_{2}.$$
This implies that $G[E(S^{\star}\cup\{w_2\}, T^{\star}\cup\{w_1\})]$ is isomorphic to $K_{\lfloor\frac{n-1}{2}\rfloor,\lceil\frac{n-1}{2}\rceil}$ and $|N_{S^{\star}\cup\{w_2\}}(w_3)|=|N_{T^{\star}\cup\{w_1\}}(w_3)|=r$. Hence $G\cong K_{\lfloor\frac{n-1}{2}\rfloor,\lceil\frac{n-1}{2}\rceil}^{r, r}$. Then we have $\mathrm{Ex}_{3}(n, B_{r+1})=\big\{K_{\lfloor\frac{n-1}{2}\rfloor,\lceil\frac{n-1}{2}\rceil}^{r, r}\big\}$ and $\mathrm{ex}_3(n, B_{r+1})=\big\lfloor\frac{(n-1)^{2}}{4}\big\rfloor+2r$ for $r\geq2$.
\hspace*{\fill}$\Box$

\section*{Declaration of competing interest}

The authors declare that they have no conflict of interest.

\section*{Data availability}

No data was used for the research described in the article.

\section*{Acknowledgements}

The research of Lu Miao is supported by CSC Program (No. 202407040080). The research of Ruifang Liu is supported by the National Natural Science Foundation of China (Nos. 12371361 and 12171440), Distinguished Youth Foundation of Henan Province (No. 242300421045).

\end{document}